\numberwithin{equation}{section}
\newtheorem{de}{Definition}[section]
\newtheorem{thm}[de]{Theorem}
\newtheorem{cor}[de]{Corollary}
\newtheorem{prop}[de]{Proposition}
\newtheorem{lem}[de]{Lemma}
\newtheorem{rem}[de]{Remark}
\newtheorem{exa}[de]{Example}
\newcommand\E{\mathcal{E}}
\newcommand\F{\mathcal{F}}
\newcommand\G{\mathcal{G}}
\newcommand\W{\mathcal{W}}
\newcommand\M{\mathcal{M}}
\newcommand\grho{\boldsymbol{\rho}}
\newcommand\x{\boldsymbol{x}}
\newcommand\y{\boldsymbol{y}}
\newcommand\supp{\mathop{\rm supp}}
\renewcommand{\textbf}[1]{\begingroup\bfseries\mathversion{bold}#1\endgroup}
\DeclareMathOperator*{\dive}{div}
\DeclareMathOperator*{\Rn}{\mathbb{R}^\textit{n}}
\DeclareMathOperator*{\R}{\mathbb{R}}
\DeclareMathOperator*{\Pa}{\mathcal{P}}
\DeclareMathOperator*{\Paa}{\mathcal{P}^{ac}}
\title{Nonlinear systems coupled through multi-marginal transport problems}
\author{Maxime Laborde \thanks{\scriptsize Department of Mathematics and Statistics, McGill University, Montreal, CANADA (\href{mailto:maxime.laborde@mcgill.ca}{\tt maxime.laborde@mcgill.ca)}}}
\date{}
\begin{document}

\maketitle
\begin{abstract}
In this paper, we introduce a dynamical urban planning model. This leads us to study a system of nonlinear equations coupled through multi-marginal optimal transport problems. 
A simple case consists in solving two equations coupled through the solution to the Monge-Ampère equation. 
We show that the Wasserstein gradient flow theory provides a very good framework to solve this highly nonlinear system. 
At the end, an uniqueness result is presented in dimension one based on convexity arguments. 
\end{abstract}

%
%

\section{Introduction}
Recently, Kinderlehrer, Monsaingeon and Xu proposed in \cite{KMX} a gradient flow approach to solve the Poisson-Nernst-Planck system
$$ \left\{\begin{array}{l}
\partial_t \rho_1 -\alpha \Delta \rho_1^m - \dive(\rho_1 \nabla(V_1+ \varphi))=0,\\
\partial_t \rho_2 - \beta \Delta \rho_2^m - \dive(\rho_2 \nabla(V_2- \varphi))=0,\\
-\Delta \varphi = \rho_1-\rho_2.
\end{array}\right.$$
This system is, for instance, used to model ionic transport of sereval interacting species. Inspired by this work we are interested in a "nonlinear" version where species $\rho_1$ and $\rho_2$ are coupled through the Monge-Ampère equation instead of the Poisson equation,
\begin{eqnarray}
\label{part5-Monge-Ampere}
\left\{ \begin{array}{l}
\partial_t \rho_1 - \alpha \Delta \rho_1^m - \dive(\rho_1 \nabla (V_1 +\varphi))=0,\\
\partial_t \rho_2 - \beta \Delta \rho_2^m - \dive(\rho_2 \nabla ( V_2 + \varphi^c))=0,\\
\det(I-D^2\varphi)\rho_2(Id - \nabla \varphi) = \rho_1,
\end{array}\right.
\end{eqnarray}
where $\varphi^c$ is the c-transform of $\varphi$,  $\varphi^c(x) = \sup_{y} |x-y|^2 -\varphi(y)$ and $|x|^2-\varphi$ is convex. \\

This kind of systems can arise naturally in urban planning. In a series of works \cite{BS,BS1,CS,CE,CE1,S1,S2,S_grenoble} (non-exhaustive list), static models of urban planning were proposed. 
A simplified model consists in considering an urban area region $\Omega$ where residents and services, given by two probability densities on $\Omega$, $\rho_1$ and $\rho_2$, want to minimize a quantity, $\E(\rho_1,\rho_2)$, to reach an ideal organization in the city. 
The total cost has to take into account a transportation cost between residential areas and service areas, a congestion effect for residential areas due to the fact that the population does not want to live in very crowded area and, on the contrary, services want to be more concentrated in order to increase efficiency and decrease management costs. 
Particularly, the cost functional $\E$ can be taken as
\begin{equation}
\label{part5-energy static}
\E(\rho_1,\rho_2)= W_c(\rho_1,\rho_2) + \F(\rho_1) + \G(\rho_2),
\end{equation} 
where $W_c$ is the value of an optimal transport problem with cost $c$. 
Several interpretations may be given to this cost. For example, it might represent the gas cost paid by workers to reach services area and then workers want to live close to services in order to decrease car travel.
$\F$ is an internal energy given by a convex superlinear function $F$,
$$ \F(\rho):= \left\{ \begin{array}{ll}
\int_\Omega F(\rho(x)) \, dx & \text{  if } F(\rho) \in L^1(\Omega),\\
+\infty & \text{  otherwise. }
\end{array}\right.$$
Since $F$ is superlinear and convex, $\F$ can be rewritten as
$$ \F(\rho) = \int_\Omega \frac{F(\rho)}{\rho}\rho,$$
where $\rho \mapsto \frac{F(\rho)}{\rho}$ is a increasing function which can be seen as the unhapiness of a citizen when he lives in a place where the population density is $\rho$. 
Finally, $\G$ is on the form
$$ \G(\rho)= \iint_{\Omega \times \Omega} h(|x-y|) \,d\rho(x)d\rho(y),$$
where $h$ is an increasing function modeling interactions between different services. \\

However, since a city is constantly evolving, it seems natural to study how evolve $\rho_1$ and $\rho_2$ in time.
This leads to study the gradient flow of $\E$ in a Wasserstein product space. 
In the case where $c$ is the quadratic cost, at least formaly, we find a system on the form \eqref{part5-Monge-Ampere} where $\varphi$ is a Kantorovich potential of $W_2(\rho_1,\rho_2)$ which implies that it satisfies the Monge-Ampère equation
$$\det(I-D^2\varphi)\rho_2(Id - \nabla \varphi) = \rho_1 .$$
In this paper, we propose to investigate a generalization of problem \eqref{part5-Monge-Ampere}. 
We extend to more than two  populations, then the transport problem becomes a multi-marginal transport problem. 
In other hand, the cost that workers want to minimize is not the same as the one of services or firms. 
Indeed, they have to take into account the gas cost to reach their work whereas this cost is not relevant for services.
Thus it is natural to assume that each population wants to minimize a transport problem with its own cost.
Since the system is not a gradient flow anymore, we will use a semi-implicit JKO scheme introduced in \cite{DFF} to deal with these different costs. \\

The organization of the paper is the following. 
Section \ref{part5-section preliminaire} recalls results from Optimal Transport and Multi-Marginal Transport theories. 
In section \ref{part5-paper main result}, we specify our problem and state our main result. 
Section \ref{part5-paper semi-implicit scheme} is devoted to the demonstration of the existence of solutions for the evolution problem \eqref{part5-Monge-Ampere}. 
The proof is based on a semi-implicit JKO scheme and on an extension of the Aubin-Lions Lemma in order to obtain strong regularity.
 At the end in section \ref{part5-paper uniqueness}, by convexity arguments, we give a uniqueness result in dimension one for some class of functionals.

\section{Preliminaries}
\label{part5-section preliminaire}

In the sequel, $\Omega$ represents a smooth open bounded subset of $\Rn$. 
\subsection{Wasserstein space}
For a detailed exposition, we refer to reference textbooks \cite{V1,V2,AGS,S}. 
We denote $\mathcal{M}^+(\Omega)$ the set of nonnegative finite Radon measures on $\Omega$, $\Pa(\Omega)$ the space of probability measures on $\Omega$, and $\Paa(\Omega)$, the subset of $\Pa(\Omega)$ of probability measures on $\Omega$ absolutely continuous with respect to the Lebesgue measure. 
For all $\rho,\mu \in \Pa(\Omega)$, we denote $\Pi(\rho,\mu)$, the set of probability measures on $\Omega \times \Omega$ having $\rho$ and $\mu$ as first and second marginals, respectively. 
If $\gamma \in \Pi(\rho,\mu)$, then $\gamma$ is called a transport plan between $\rho$ and $\mu$. 
\\
For all $\rho,\mu \in \Pa(\Omega)$, we denote by $W_2(\rho,\mu)$ the Wasserstein distance between $\rho$ and $\mu$,
$$ W_2^2(\rho,\mu)= \inf\left\{ \iint_{\Omega \times \Omega} |x-y|^2 \,d\gamma(x,y) \, : \, \gamma \in \Pi(\rho,\mu)\right\}.$$
Since this optimal transportation problem is a linear problem under linear constraint, it admits a dual formulation given by
$$ W_2^2(\rho,\mu)=\sup\left\{ \int_\Omega \varphi(x) \,d\rho(x) +\int_\Omega \psi(y) \, d\mu(y) \, : \, \varphi(x) +\psi(y) \leqslant |x-y|^2 \right\}.$$
Optimal solutions of the dual problem are called Kantorovich potentials between $\rho$ and $\mu$. 
If $\rho \in \Paa(\Omega)$, Brenier proves in \cite{B} that the optimal transport plan, $\gamma$, is unique and induced by an optimal transport map, $T$, i.e  $\gamma$ is on the form  $(Id \times T)_{\#} \rho$, where $T_{\#}\rho =\mu$ and $T$ is the gradient of a convex function. 
Moreover, the optimal transport map is given by $T=Id -\nabla \varphi$ where $\varphi$ is a Kantorovich potential between $\rho$ and $\mu$.\\
It is well known that $\Pa(\Omega)$ endowed with the Wasserstein distance defines a metric space and $W_2$ metrizes the narrow convergence of probability measures.
If $\boldsymbol{\rho}=(\rho_1, \dots , \rho_l)$ and $\boldsymbol{\mu}=(\mu_1, \dots , \mu_l)$ are in $\mathcal{P}(\Omega)^l$, we define the product distance by
$$ \boldsymbol{W_2} (\boldsymbol{\rho},\boldsymbol{\mu}) = \left( \sum_{i=1}^l W_2^2(\rho_i, \mu_i) \right)^{1/2}.$$
\subsection{Multi-marginal transportation problem}
In this section, we recall some results from the multi-marginal transport theory that we will used in the sequel. 
We refer, for instance, to \cite{Pass,DMGN} for a complete survey on this topic. 
The usual transport optimal can be extended to several marginals $\rho_1, \dots, \rho_l \in \mathcal{P}(\Omega)$. 
Let $c$ be a cost function from $\Omega^l$ to $\R$, the multi-marginal transport problem, $\W_c$, is defined by
$$ \W_c(\rho_1, \dots , \rho_l):=\inf \left\{ \int_{\Omega^l} c(x_1,\dots,x_l)\,d\lambda(x_1,\dots , x_l) \, :\, \lambda \in \Pi(\rho_1, \dots , \rho_l) \right\},$$
where $\Pi(\rho_1,\dots , \rho_l):= \left\{ \lambda \in \Pa(\Omega^l) \, : \, {\pi^{i}}_{\#} \lambda =\rho_i \right\}$ and $\pi^i$ denotes the canonical projection from $\Omega^l$ to $\Omega$. 
By standard arguments, the existence of an optimal transport plan is guaranteed as in the two marginals case. 
Then, if we assume that $c$ is continuous on $\overline{\Omega}^l$, the following dual formulation holds
$$ \W_c(\rho_1, \dots , \rho_l)=\sup \left\{ \sum_{i=1}^l \int_\Omega u_i(x_i)\,d\rho_i(x_i) \, : \, \sum_{i=1}^l u_i(x_i) \leqslant c(x_1, \dots ,x_l)\right\}.$$
\\
Any optimal $u_1, \dots , u_l$ for the dual formulation are called Kantorovich potentials and are $c$-conjugate functions, i.e
$$ u_i(x_i)= \inf \left\{ c(x_1,\dots , x_l) - \sum_{j=1,j \neq i}^{l} u_j(x_j), \, x_j\in \Omega\right\}, \text{ for all } i=1,\dots ,l.$$ 
\\
For any $\lambda$ optimal transport plan and $u_1, \dots , u_l$ Kantorovich potentials, we get
$$ \sum_{i=1}^l u_i(x_i) = c(x_1, \dots, x_l), \qquad \lambda-a.e.$$
In addition, assuming that $\rho_i$ is absolutely continuous with respect to the Lebesgue measure and $c$ is differentiable in the $i$-th variable, then $u_i$ is a Lipschitz function and
\begin{eqnarray}
\label{eq: u lipschitz}
\nabla u_i(x_i) = \nabla_{x_i} c(x_1, \dots ,x_l),  \qquad \lambda-a.e.
\end{eqnarray}

\section{Assumptions and main result}
\label{part5-paper main result}

In the following, we assume that we have $l>1$ different populations. 
The congestion fonctional associated to the population $\rho_i$ is given by 
$$ \F_i(\rho):= \left\{ \begin{array}{ll}
\int_\Omega F_i(\rho(x)) \, dx & \text{  if } F_i(\rho) \in L^1(\Omega),\\
+\infty & \text{  otherwise. }
\end{array}\right.$$
where $F_i \, : \, \R_+ \rightarrow \R$ is a strictly convex superlinear function of class $\mathcal{C}^2$.
Define $P_i(x):=xF_i'(x)-F_i(x)$ the pressure associated to $F_i$, we assume
\begin{align}
\label{part5-hyp F}
F_i(0)=0 \text{  and  } P_i(x) \leqslant C(1+F_i(x)).
\end{align} 
The typical examples of energies with have in mind are $F(\rho) := \rho \log(\rho)$, which gives a linear diffusion driven by the Laplacian, and $F(\rho):= \rho^m$ ($m>1$), which corresponds to the porous medium diffusion.\\
The multi-marginal interaction energy $\mathcal{W}_i \, : \, \mathcal{P}(\Omega)^l \rightarrow \R$ is defined by
\begin{eqnarray*}
\mathcal{W}_i(\rho_1,\dots , \rho_l):= \inf \left\{ \int_{\Omega^l} c_i(x_1, \dots, x_l) \,d\lambda(x_1, \dots, x_l) \, : \, \lambda\in \Pi(\rho_1,\dots , \rho_l) \right\}.
\end{eqnarray*}
where the cost function $c_i \, : \, \Omega^{l} \rightarrow \mathbb{R}$ is assumed to be continuous on $\overline{\Omega}^l$ and differentiable with respect to $x_i$ such that $\nabla_{x_i} c_i$ is continuous on $\overline{\Omega}^l$ and bounded on $\overline{\Omega}^l$.
\begin{exa}[Barycenter]
Assume $l=3$ and $\rho_1$ evolves minimizing at each step the functional 
$$ (\rho_1, \rho_2, \rho_3) \mapsto \alpha W_{2}^{2}(\rho_1,\rho_2) + \beta W_{2}^2(\rho_1 , \rho_3).$$
That means that $\rho_1$ wants to reach the barycenter in the Wasserstein space of $\rho_2$, $\rho_3$ with weight $\alpha,\beta >0$, see \cite{AC}. 
This functional can be rewritten as the multi-marginal problem
$$ \inf_{\gamma \in \Pi(\rho_1, \rho_2, \rho_3)} \int_{\Omega} c(x,y,z ) \,d\gamma(x,y,z),$$
where $c(x,y,z)= \alpha |x-y|^2 + \beta |x-z|^2$ satisfies the assumptions above.

\end{exa}

The goal of this paper is to study existence and uniqueness of solution to the following nonlinear diffusion system with nonlocal interactions
\begin{eqnarray}
\label{part5-system}
\left\{ \begin{array}{ll}
\partial_t \rho_i = \Delta P_i(\rho_i) +\dive(\rho_i \nabla u_i) & \text{ in } \mathbb{R}^+ \times \Omega,\\
{\rho_i}_{|t=0}=\rho_{i,0}, &
\end{array}\right. , \qquad i \in \{1, \dots ,l\},  
\end{eqnarray}
where $u_i$ is an optimal Kantorovich potential of 
\begin{eqnarray}
\label{eq:multi-marginal problem}
\mathcal{W}_i(\rho_1,\dots , \rho_l):= \inf \left\{ \int_{\Omega^l} c_i(x_1, \dots, x_l) \,d\lambda(x_1, \dots, x_l) \, : \, \lambda\in \Pi(\rho_1,\dots , \rho_l) \right\}.
\end{eqnarray}
Since $\Omega$ is a bounded subset of $\Rn$, \eqref{part5-system} is supplemented with Neumann boundary conditions on $\partial \Omega$,
\begin{eqnarray}
\label{eq:boundary conditions}
(\nabla P_i(\rho_i) +  \nabla u_i \rho_i) \cdot \nu =0 \qquad \text{ on } \mathbb{R}^+ \times \partial \Omega,
\end{eqnarray} 
where $\nu$ is the outward normal to $\partial \Omega$.
To simplify the exposition, we do not treat potentiels or nonlocal interactions in \eqref{eq:multi-marginal problem} even if this can be added easily.
\bigskip

The main difficulty is to handle the nonlinear cross term $\dive(\rho_i \nabla u_i)$. 
However, we remark that if $\lambda_i$ is an optimal transport plan in \eqref{eq:multi-marginal problem} and $\rho_i$ is absolutely continuous with respect to the Lebesgue measure then, by \eqref{eq: u lipschitz},
\begin{eqnarray}
\label{eq:caract pot Kantorovich avec le cout }
\nabla u_i(t,x_i) = \nabla_{x_i} c_i(x_1, \dots,x_l),  \qquad \lambda_i(t)-a.e.
\end{eqnarray}
Consequently, for all $\Phi \in \mathcal{C}_c^{\infty}([0,+\infty) \times \Rn)$,
\begin{multline*}
 \int_0^{+\infty}\int_\Omega \rho_i(t,x) \nabla u_i(t,x) \cdot \nabla \Phi(t,x) \,dxdt\\
  = \int_0^{+\infty}\int_{\Omega^l} \nabla_{x_i} c_i(x_1, \dots, x_l) \cdot \nabla \Phi(t,x_i) \, d\lambda_i(t,x_1, \dots, x_l)dt,
 \end{multline*}
 since $\lambda_i(t)$ solves $\W_i(\rho_1(t), \dots, \rho_l(t))$, $t$-a.e, and \eqref{eq:caract pot Kantorovich avec le cout }. 
Since the right hand side is a linear term with respect to $\lambda_i$, it is easier to work with this one and then, we define a weak solution of \eqref{part5-system}-\eqref{eq:boundary conditions} in the following way.

\begin{de}
A weak solution of \eqref{part5-system}-\eqref{eq:boundary conditions} is a curve $t \in (0,+\infty) \mapsto (\rho_1(t), \dots , \rho_l(t)) \in \Paa(\Omega)^l$ such that $ \nabla P_i(\rho_i) \in \M^n((0,T) \times \Omega)$, for all $T<+\infty$, and
\begin{multline*}
\int_0^{+ \infty} \left( \int_\Omega \partial_t \Phi \rho_i \,dx - \int_\Omega \nabla \Phi \cdot d\nabla P_i(\rho_i) \right)\\
-\int_{\Omega^l} \nabla_{x_i}c_i(x_1, \dots, x_l) \cdot \nabla \Phi(t,x_i) \, d\lambda_i(t,x_1, \dots, x_l)  \, dt
 = - \int_\Omega \Phi(0,x) \rho_{i,0}(x) \, dx,
 \end{multline*}
for every $\Phi \in \mathcal{C}^\infty_c([0,+\infty)\times \Rn)$, where $\lambda_i(t)$ is an optimal transport plan of $\W_i(\rho_1(t), \dots, \rho_l(t))$, $t$-a.e.
\end{de}
Our main result is the following:
\begin{thm}
\label{part5-existence}
If $\rho_{i,0} \in \Paa(\Omega)$ satisfy
\begin{eqnarray}
\label{part5-CI}
\F_i(\rho_{i,0})<+\infty,
\end{eqnarray}
Then \eqref{part5-system}-\eqref{eq:boundary conditions} admits at least one weak solution.
\end{thm}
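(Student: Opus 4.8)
The plan is to construct a weak solution by a semi-implicit minimizing-movement (JKO) scheme in the spirit of \cite{DFF}: although \eqref{part5-system} is not a gradient flow (the costs $c_i$ differ), freezing the other marginals inside each interaction term makes every time step a genuine variational problem. Fix $\tau\in(0,1)$, put $\rho_i^0:=\rho_{i,0}$, and define $\boldsymbol\rho^{k+1}$ inductively by choosing, for each $i\in\{1,\dots,l\}$,
\[
\rho_i^{k+1}\in\argmin_{\rho\in\Pa(\Omega)}\ \frac{1}{2\tau}W_2^2(\rho,\rho_i^k)+\F_i(\rho)+\mathcal{W}_i\bigl(\rho_1^{k},\dots,\rho_{i-1}^{k},\rho,\rho_{i+1}^{k},\dots,\rho_l^{k}\bigr).
\]
Existence of a minimizer follows from the direct method: $W_2$ and $\F_i$ are narrowly lower semicontinuous, $\F_i$ is bounded below on $\Pa(\Omega)$ (Jensen's inequality, $|\Omega|<+\infty$), $\mathcal{W}_i$ is continuous for narrow convergence of its marginals since $c_i\in C(\overline\Omega^l)$, and $\Pa(\overline\Omega)$ is narrowly compact. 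Using $\rho_i^k$ as a competitor, $\F_i(\rho_{i,0})<+\infty$ together with the boundedness of $c_i$ forces $\F_i(\rho_i^{k+1})<+\infty$, hence $\rho_i^{k+1}\in\Paa(\Omega)$. Perturbing $\rho_i^{k+1}$ by $(\id+s\zeta)_\#\rho_i^{k+1}$ for $\zeta\in C_c^\infty(\Omega;\Rn)$ and differentiating at $s=0$ yields, writing $T_i^k$ for the optimal map from $\rho_i^{k+1}$ to $\rho_i^k$ (well defined as $\rho_i^{k+1}\ll\Leb$) and $v_i^k:=(\id-T_i^k)/\tau$, the discrete Euler--Lagrange relation in flux form
\[
\nabla P_i(\rho_i^{k+1})=-\rho_i^{k+1}v_i^k-\rho_i^{k+1}\nabla u_i^{k+1}\quad\text{in }\Omega,
\]
which, together with the natural Neumann condition (obtained by also perturbing near $\partial\Omega$), is the discrete form of \eqref{part5-system}--\eqref{eq:boundary conditions}; here $u_i^{k+1}$ is a Kantorovich potential of $\mathcal{W}_i(\rho_1^k,\dots,\rho_i^{k+1},\dots,\rho_l^k)$, and by \eqref{eq: u lipschitz} one has $\int_\Omega\rho_i^{k+1}\nabla u_i^{k+1}\cdot\zeta\,dx=\int_{\Omega^l}\nabla_{x_i}c_i\cdot\zeta(x_i)\,d\lambda_i^{k+1}$ for an optimal plan $\lambda_i^{k+1}$, whence $\nabla P_i(\rho_i^{k+1})\in L^1(\Omega)^n$.

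For the a priori estimates, compare the value at $\rho_i^{k+1}$ with that at $\rho_i^k$ and use that $\mu\mapsto\mathcal{W}_i(\rho_1^k,\dots,\mu,\dots,\rho_l^k)$ is $L_i$-Lipschitz for $W_1$, with $L_i:=\sup_{\overline\Omega^l}|\nabla_{x_i}c_i|<+\infty$: indeed, by the dual formulation a Kantorovich potential for one marginal is admissible for the other, and $|\nabla u_i|\le L_i$. This gives
\[
\frac{1}{2\tau}W_2^2(\rho_i^{k+1},\rho_i^k)+\F_i(\rho_i^{k+1})\le\F_i(\rho_i^k)+L_i\,W_2(\rho_i^{k+1},\rho_i^k);
\]
absorbing the last term by Young's inequality and summing over $k\le T/\tau$ yields, uniformly in $\tau$, the energy bound $\sup_k\F_i(\rho_i^k)\le C$ and the total-square-distance bound $\sum_k W_2^2(\rho_i^{k+1},\rho_i^k)\le C\tau$, equivalently $\sum_k\tau\,\|v_i^k\|_{L^2(\rho_i^{k+1})}^2\le C$. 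Inserting this in the flux-form relation and using $|\nabla u_i^{k+1}|\le L_i$, the piecewise-constant time interpolant $\nabla P_i(\rho_i^\tau)$ is bounded in $L^1((0,T)\times\Omega)^n$; combined with the pointwise bound $P_i\le C(1+F_i)$ from \eqref{part5-hyp F} and $\sup_k\F_i(\rho_i^k)\le C$, this gives a uniform bound on $P_i(\rho_i^\tau)$ in $L^1(0,T;W^{1,1}(\Omega))$, and (using $\sup_t\F_i(\rho_i^\tau(t))\le C$) also in $L^\infty(0,T;L^1(\Omega))$.

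With the piecewise-constant interpolations $\rho_i^\tau(t):=\rho_i^{k+1}$ and $\lambda_i^\tau(t):=\lambda_i^{k+1}$ on $(k\tau,(k+1)\tau]$, the bound $\sum_k W_2^2(\rho_i^{k+1},\rho_i^k)\le C\tau$ with the triangle and Cauchy--Schwarz inequalities gives $W_2(\rho_i^\tau(t),\rho_i^\tau(s))\le \sqrt C\,(|t-s|^{1/2}+\tau^{1/2})$, so a refined Arzel\`a--Ascoli theorem on the compact metric space $(\Pa(\overline\Omega),W_2)$ extracts a subsequence along which $\rho_i^\tau\to\rho_i$ in $C([0,T];(\Pa(\overline\Omega),W_2))$; then $\rho_i^\tau(t)\to\rho_i(t)$ narrowly for every $t$, and $\F_i(\rho_i(t))\le C$ by lower semicontinuity, so $\rho_i(t)\in\Paa(\Omega)$. \textbf{The main obstacle is to upgrade this to strong convergence in $L^1((0,T)\times\Omega)$.} The $L^1(0,T;W^{1,1}(\Omega))$-bound on $P_i(\rho_i^\tau)$ provides spatial compactness (Rellich--Kondrachov) and the $W_2$-equicontinuity in time just obtained provides the time-regularity ingredient, but since the spatial bound is on the nonlinear quantity $P_i(\rho_i^\tau)$ rather than on $\rho_i^\tau$ itself, the classical Aubin--Lions lemma does not apply directly: one must invoke an extension of it (with $P_i$ continuous and strictly increasing, as $P_i'(x)=xF_i''(x)>0$ for $x>0$). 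This produces, up to a further subsequence, $\rho_i^\tau\to\rho_i$ strongly in $L^1((0,T)\times\Omega)$ and a.e.; combining this with the $L^\infty(0,T;L^1)\cap L^1(0,T;W^{1,1})$ bounds on $P_i(\rho_i^\tau)$ (hence, by Sobolev embedding and interpolation, an $L^{1+\delta}$ bound, giving equi-integrability) one also gets $P_i(\rho_i^\tau)\to P_i(\rho_i)$ in $L^1((0,T)\times\Omega)$, whence $\nabla P_i(\rho_i^\tau)\rightharpoonup\nabla P_i(\rho_i)$ and $\nabla P_i(\rho_i)\in\M^n((0,T)\times\Omega)$.

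It remains to pass to the limit in the scheme. The plans $\lambda_i^\tau(t)$ lie in the narrowly compact $\Pa(\overline\Omega^l)$ and have marginals converging narrowly, uniformly in $t$, to $(\rho_1(t),\dots,\rho_l(t))$; passing to the limit narrowly in $dt\otimes\lambda_i^\tau(t)$ on $[0,T]\times\overline\Omega^l$ and disintegrating, stability of optimal plans under narrow convergence of marginals (valid since $c_i\in C(\overline\Omega^l)$) together with continuity of $\mathcal{W}_i$ identifies the limit as $dt\otimes\lambda_i(t)$ with $\lambda_i(t)$ optimal for $\mathcal{W}_i(\rho_1(t),\dots,\rho_l(t))$ for a.e.\ $t$. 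Now for $\Phi\in C_c^\infty([0,+\infty)\times\Rn)$, a Taylor expansion of $\Phi$ along $T_i^k$ gives $\int_\Omega\Phi(t_{k+1})\,d(\rho_i^{k+1}-\rho_i^k)=\tau\int_\Omega\nabla\Phi(t_{k+1})\cdot v_i^k\,\rho_i^{k+1}\,dx+O(W_2^2(\rho_i^{k+1},\rho_i^k))$; summing over $k$, the left-hand side telescopes by discrete summation by parts to $-\int_0^{+\infty}\!\int_\Omega\partial_t\Phi\,\rho_i-\int_\Omega\Phi(0,x)\rho_{i,0}(x)\,dx$ in the limit, while $\sum_k\tau\int_\Omega\nabla\Phi\cdot v_i^k\rho_i^{k+1}=\int_0^T\!\int_\Omega\nabla\Phi\cdot(\rho_i^\tau v_i^\tau)$ equals, by the flux-form relation, $-\int_0^T\!\int_\Omega\nabla\Phi\cdot\nabla P_i(\rho_i^\tau)-\int_0^T\!\int_{\Omega^l}\nabla_{x_i}c_i\cdot\nabla\Phi(t,x_i)\,d\lambda_i^\tau$, which converges to $-\int_0^{+\infty}\!\int_\Omega\nabla\Phi\cdot d\nabla P_i(\rho_i)-\int_0^{+\infty}\!\int_{\Omega^l}\nabla_{x_i}c_i\cdot\nabla\Phi(t,x_i)\,d\lambda_i$ by the convergence of $\nabla P_i(\rho_i^\tau)$ and the bounded continuity of $\nabla_{x_i}c_i$ and $\nabla\Phi$ with the narrow convergence of $\lambda_i^\tau$ (the error terms vanishing since $\sum_k W_2^2\le C\tau$). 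This is exactly the weak formulation of \eqref{part5-system}--\eqref{eq:boundary conditions}, with $\lambda_i(t)$ optimal for a.e.\ $t$, and the theorem follows.
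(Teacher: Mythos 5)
Your proposal is correct and follows essentially the same route as the paper: the semi-implicit JKO scheme with frozen marginals, the Lipschitz estimate on the multi-marginal term, the flux-form Euler--Lagrange identity giving the $L^1(0,T;W^{1,1}(\Omega))$ bound on $P_i(\rho_{i,h})$, refined Arzel\`a--Ascoli plus the Rossi--Savar\'e extension of Aubin--Lions for strong $L^1$ compactness, and passage to the limit in the discrete weak formulation. The only step you treat more briefly is the optimality of the limit plan $\lambda_i(t)$ for a.e.\ $t$, which the paper establishes through an explicit gluing/recovery-sequence construction and a test-function-in-time argument rather than by invoking stability of optimal plans as a black box, but the substance is the same.
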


\begin{rem}
To simplify the analysis we assume that each population has an individual diffusion. 
This implies that solutions are absolutly continuous with respect to the Lebesgue measure and then the Kantorovivh potentials are Lipschitz. 
Theorem \ref{part5-existence} can be generalized replacing $\nabla u_i$ by 
$$ U_i(x_i) = \int_{\Omega^{l-1} }\nabla_{x_i} c_i(x_1, \dots, x_l) \, d\lambda_i^{x_i}(\boldsymbol{\check{x}_i}),$$
where $\boldsymbol{\check{x}_i}=(x_1, \dots, x_{i-1},x_{i+1},\dots ,x_l)$ and $\lambda_i^{x_i}$ is obtained by disintegrating the optimal transport plan $\lambda_i$ with respect to $\rho_i$,
$$ \lambda_i = \lambda_i^{x_i} \otimes \rho_i.$$
\end{rem}

\section{Existence result}
\label{part5-paper semi-implicit scheme}

The proof of Theorem \ref{part5-existence} is based on a variant of the well-known JKO scheme introduced by Jordan, Kinderlherer and Otto, \cite{JKO}. 
We construct by induction with a semi-implicit Euler scheme $l$ sequences $(\rho_{i,h}^k)_{k \in \mathbb{N}} \subset \mathcal{P}^{ac}(\Omega)$, where $h>0$ is a given time step. 
Since the multi-marginal functional $\W_i$ depends on the density $i$, system \eqref{part5-system}-\eqref{eq:boundary conditions} is not a gradient flow in a Wasserstein product space. 
We introduce the functional $\overline{\W}_i(\cdot |\boldsymbol{\mu} )$, where $\boldsymbol{\mu}=(\mu_1, \dots, \mu_l)$, defined by
$$ \overline{\W}_i(\rho|\boldsymbol{\mu}):=\W_i(\mu_1,, \mu_{i-1},\rho,\mu_{i+1}, \dots, \mu_l).$$
In other words, $\overline{\W}_i(\rho|\boldsymbol{\mu})$ is the multi-marginal problem with marginals $\mu_1,\dots, \mu_{i-1},\rho,\mu_{i+1},$ $ \dots, \mu_l$.
\\
Sequences $(\rho_{i,h}^k)_{k \in \mathbb{N}}$ are then constructed using the following semi-implicit JKO scheme: for all $i \in [\![ 1 ,l ]\!]$, $\rho_{i,h}^0=\rho_{i,0}$ and for all $k\geqslant 0$, $\rho_{i,h}^{k+1}$ minimizes
\begin{eqnarray}
\label{eq: JKO scheme}
 \E_{i,h}(\rho | \grho_{h}^{k}) := W_2^2(\rho, \rho_{i,h}^{k}) +2h\left( \F_i(\rho) + \overline{\W}_i(\rho |\grho_{h}^{k} ) \right),
\end{eqnarray}
on $\rho \in \mathcal{P}^{ac}(\Omega)$, where $\grho_{h}^{k}:=(\rho_{1,h}^{k}, \dots, \rho_{l,h}^{k})$. 
At each step, all the marginals are frozen except the $i$-th marginal in the functional \eqref{eq:multi-marginal problem}.\\
These sequences are well defined by standard arguments.
Define the piecewise constant interpolations by, $\rho_{i,h}(0)=\rho_{i,0}$ and for all $t>0$,
\begin{eqnarray}
\label{part5-interpolation}
\rho_{i,h}(t):=\rho_{i,h}^{k+1} \text{ if } t\in(hk,h(k+1)].
\end{eqnarray}
Let $\lambda_{i,h}^{k+1}$ be an optimal transport map for $\W_i\left(\rho_{1,h}^{k}, \dots, \rho_{i-1,h}^{k},\rho_{i,h}^{k+1},\rho_{i+1,h}^{k}, \dots, \rho_{l,h}^{k} \right)$ and $\lambda_{i,h}$ be the piecewise constant interpolation defined by
\begin{eqnarray}
\label{eq:interpolation plan optimal}
 \lambda_{i,h}(t):=\lambda_{i,h}^{k+1} \text{  if } t\in(hk,h(k+1)].
 \end{eqnarray}
\subsection{Basic a priori estimates}

In this section we retrieve the usual estimates in the Wasserstein gradient flow theory. 
First, we show that $\overline{\W}_i$ is Lipschitz in the Wasserstein space.
\begin{lem} 
\label{lem: lipshitz functional}
There exists a constant $C>0$ such that, for all $\boldsymbol{\mu}:=(\mu_1, \dots, \mu_l) \in \Pa(\Omega)^l$, and for all $\rho_1, \rho_2 \in \Paa(\Omega)$,
$$ \overline{\W}_i(\rho_1 | \boldsymbol{\mu} ) - \overline{\W}_i(\rho_2 | \boldsymbol{\mu} ) \leqslant C W_2(\rho_1,\rho_2).$$
\end{lem}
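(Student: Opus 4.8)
The plan is to bound $\overline{\W}_i(\rho_1|\boldsymbol{\mu})-\overline{\W}_i(\rho_2|\boldsymbol{\mu})$ by constructing an admissible competitor for the multi-marginal problem defining $\overline{\W}_i(\rho_1|\boldsymbol{\mu})$ out of an optimal plan for $\overline{\W}_i(\rho_2|\boldsymbol{\mu})$ together with an optimal transport plan between $\rho_2$ and $\rho_1$. Concretely, let $\lambda_2$ be an optimal plan in $\Pi(\mu_1,\dots,\mu_{i-1},\rho_2,\mu_{i+1},\dots,\mu_l)$ for the cost $c_i$, and let $T:\Omega\to\Omega$ be the Brenier optimal transport map with $T_\#\rho_2=\rho_1$ (which exists since $\rho_2\in\Paa(\Omega)$). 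Disintegrate $\lambda_2$ with respect to its $i$-th marginal $\rho_2$, writing $\lambda_2=\lambda_2^{x_i}\otimes\rho_2$, and then define a new plan $\tilde\lambda$ on $\Omega^l$ by transporting the $i$-th coordinate through $T$: informally, $\tilde\lambda$ is the law of $(x_1,\dots,x_{i-1},T(x_i),x_{i+1},\dots,x_l)$ under $\lambda_2$. By construction $\tilde\lambda\in\Pi(\mu_1,\dots,\mu_{i-1},\rho_1,\mu_{i+1},\dots,\mu_l)$, so it is admissible for $\overline{\W}_i(\rho_1|\boldsymbol{\mu})$.

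Then I would estimate
$$\overline{\W}_i(\rho_1|\boldsymbol{\mu})\leqslant \int_{\Omega^l}c_i\,d\tilde\lambda = \int_{\Omega^l}c_i(x_1,\dots,T(x_i),\dots,x_l)\,d\lambda_2,$$
and subtract $\overline{\W}_i(\rho_2|\boldsymbol{\mu})=\int_{\Omega^l}c_i(x_1,\dots,x_i,\dots,x_l)\,d\lambda_2$ to get
$$\overline{\W}_i(\rho_1|\boldsymbol{\mu})-\overline{\W}_i(\rho_2|\boldsymbol{\mu})\leqslant \int_{\Omega^l}\bigl(c_i(x_1,\dots,T(x_i),\dots,x_l)-c_i(x_1,\dots,x_i,\dots,x_l)\bigr)\,d\lambda_2.$$
Since $c_i$ is differentiable in $x_i$ with $\nabla_{x_i}c_i$ bounded on $\overline{\Omega}^l$, the mean value theorem gives $|c_i(x_1,\dots,T(x_i),\dots,x_l)-c_i(x_1,\dots,x_i,\dots,x_l)|\leqslant \|\nabla_{x_i}c_i\|_\infty\,|T(x_i)-x_i|$. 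Hence the difference is bounded by $\|\nabla_{x_i}c_i\|_\infty\int_\Omega|T(x_i)-x_i|\,d\rho_2(x_i)$, and by Cauchy–Schwarz this is at most $\|\nabla_{x_i}c_i\|_\infty\bigl(\int_\Omega|T(x_i)-x_i|^2\,d\rho_2\bigr)^{1/2}=\|\nabla_{x_i}c_i\|_\infty\,W_2(\rho_1,\rho_2)$, giving the claim with $C=\|\nabla_{x_i}c_i\|_\infty$ (or the max over $i$).

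The main technical obstacle is making the "push the $i$-th coordinate through $T$" construction rigorous and verifying $\tilde\lambda$ has the correct marginals: this requires the disintegration theorem and a careful bookkeeping of which coordinate is being moved, plus checking measurability of the resulting plan. A clean way to phrase it is via a map $S:\Omega^l\to\Omega^l$ acting as the identity on all coordinates except the $i$-th, where it acts as $T$, and set $\tilde\lambda=S_\#\lambda_2$; then $\pi^j_\#\tilde\lambda=\pi^j_\#\lambda_2=\mu_j$ for $j\neq i$ because $\pi^j\circ S=\pi^j$, and $\pi^i_\#\tilde\lambda=T_\#(\pi^i_\#\lambda_2)=T_\#\rho_2=\rho_1$. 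One subtlety worth noting: we actually only need $T$ to be \emph{some} map with $T_\#\rho_2=\rho_1$ and $\int|T-\id|^2\,d\rho_2=W_2^2(\rho_2,\rho_1)$, which Brenier's theorem supplies; no convexity or uniqueness beyond that is used. Everything else is routine.
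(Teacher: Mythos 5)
Your proof is correct and follows essentially the same route as the paper: both construct a competitor for $\overline{\W}_i(\rho_1|\boldsymbol{\mu})$ by pushing the $i$-th coordinate of an optimal plan for $\overline{\W}_i(\rho_2|\boldsymbol{\mu})$ through the $W_2$-optimal map $T$ with $T_\#\rho_2=\rho_1$, then use the bound on $\nabla_{x_i}c_i$ and Cauchy--Schwarz. Your remark that only the existence of a map achieving the Wasserstein cost is needed (not convexity or uniqueness) is a fair observation, but the argument itself matches the paper's.
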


\begin{proof}
Let $\gamma$ be the $W_2$-optimal transport plan between $\rho_1$ and $\rho_2$ and $T$ the $W_2$-optimal transport map associated to $\gamma$ i.e $\gamma =(T \times I)_{\#} \rho_2$. Let $\lambda \in \Pi\left(\mu_1, \dots, \mu_{i-1},\rho_{2},\mu_{i+1}, \dots, \mu_{l}\right)$ optimal for $\overline{\W}_i(\rho_2 | \boldsymbol{\mu} )$. 
Define $\lambda_T$ by
$$ \int_{\Omega^l} \varphi(x_1,\dots,x_l) \, d\lambda_T(x_1,\dots,x_l):= \int_{\Omega^l} \varphi(x_1,\dots,T(x_i),\dots,x_l) \, d\lambda(x_1,\dots,x_l),$$
for all $\varphi \in \mathcal{C}(\Omega^l)$. 
By definition, $\lambda_T \in \Pi\left(\mu_1, \dots, \mu_{i-1},\rho_{1},\mu_{i+1}, \dots, \mu_{l}\right)$. 
Then,
\begin{eqnarray*}
 \overline{\W}_i(\rho_1 | \boldsymbol{\mu} ) - \overline{\W}_i(\rho_2 | \boldsymbol{\mu} ) & \leqslant & \int_{\Omega^l} \left[ c_i(x_1,\dots,T(x_i),\dots,x_l) - c_i(x_1,\dots,x_l)\right] \, d\lambda(x_1,\dots , x_l)\\
& \leqslant & \| \nabla_{x_i} c_i \|_{L^\infty} \int_{\Omega^l} |T(x_i) -x_i |\, d\lambda(x_1,\dots , x_l)\\
&\leqslant & C W_2(\rho_1,\rho_2),
\end{eqnarray*}
where we used the assumption on $\nabla_{x_i} c_i$ and Cauchy-Schwarz inequality.
\end{proof}
In the next proposition, we state usual estimates from JKO scheme.
\begin{prop}
For all $T>0$, there exists $C_T>0$ such that, for all $h,k$, with $hk<T$, $N=\lfloor \frac{T}{h } \rfloor$, for $i \in [\![ 1, l ]\!],$ we have
\begin{eqnarray}
&\F_i(\rho_{i,h}^k) \leqslant C_T,\label{part5-estimation fonctional}\\
&\sum_{k=0}^{N-1} W_2^2(\rho_{i,h}^k,\rho_{i,h}^{k+1}) \leqslant C_Th.\label{part5-estimation distance}
\end{eqnarray}
\end{prop}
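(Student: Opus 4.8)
The plan is to mimic the standard energy estimate from the JKO scheme, using the minimality of $\rho_{i,h}^{k+1}$ together with the Lipschitz bound on $\overline{\W}_i$ proved in Lemma \ref{lem: lipshitz functional} to absorb the coupling term. First I would write down the defining inequality of the scheme: since $\rho_{i,h}^{k+1}$ minimizes $\E_{i,h}(\cdot\,|\grho_h^k)$, comparing with the competitor $\rho_{i,h}^k$ gives
$$
W_2^2(\rho_{i,h}^{k+1},\rho_{i,h}^k) + 2h\big(\F_i(\rho_{i,h}^{k+1}) + \overline{\W}_i(\rho_{i,h}^{k+1}\,|\grho_h^k)\big) \le 2h\big(\F_i(\rho_{i,h}^k) + \overline{\W}_i(\rho_{i,h}^k\,|\grho_h^k)\big).
$$
Rearranging, $W_2^2(\rho_{i,h}^{k+1},\rho_{i,h}^k) + 2h\F_i(\rho_{i,h}^{k+1}) \le 2h\F_i(\rho_{i,h}^k) + 2h\big(\overline{\W}_i(\rho_{i,h}^k\,|\grho_h^k) - \overline{\W}_i(\rho_{i,h}^{k+1}\,|\grho_h^k)\big)$, and by Lemma \ref{lem: lipshitz functional} the last bracket is bounded by $C\,W_2(\rho_{i,h}^{k+1},\rho_{i,h}^k)$.

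Next I would use Young's inequality on the term $2hC\,W_2(\rho_{i,h}^{k+1},\rho_{i,h}^k) \le \frac12 W_2^2(\rho_{i,h}^{k+1},\rho_{i,h}^k) + 2h^2C^2$ to absorb half of the transport term into the left-hand side. This yields
$$
\tfrac12 W_2^2(\rho_{i,h}^{k+1},\rho_{i,h}^k) + 2h\F_i(\rho_{i,h}^{k+1}) \le 2h\F_i(\rho_{i,h}^k) + 2C^2h^2.
$$
Dropping the nonnegative transport term gives $\F_i(\rho_{i,h}^{k+1}) \le \F_i(\rho_{i,h}^k) + C^2 h$; iterating this from $0$ to $k$ and using $hk < T$ together with the hypothesis $\F_i(\rho_{i,0}) < +\infty$ gives $\F_i(\rho_{i,h}^k) \le \F_i(\rho_{i,0}) + C^2 T =: C_T$, which is \eqref{part5-estimation fonctional}. (One should note $\F_i$ is bounded below on $\Pa(\Omega)$ since $\Omega$ is bounded and $F_i$ is convex superlinear with $F_i(0)=0$, so all quantities are genuinely controlled.) For \eqref{part5-estimation distance}, I would instead keep the transport term: summing $\tfrac12 W_2^2(\rho_{i,h}^{k+1},\rho_{i,h}^k) \le 2h(\F_i(\rho_{i,h}^k) - \F_i(\rho_{i,h}^{k+1})) + 2C^2h^2$ over $k=0,\dots,N-1$ telescopes the energy differences to $2h(\F_i(\rho_{i,0}) - \F_i(\rho_{i,h}^N)) \le 2h(\F_i(\rho_{i,0}) + \text{(lower bound constant)})$, while the error terms sum to $2C^2 N h^2 \le 2C^2 T h$; hence $\sum_{k=0}^{N-1} W_2^2(\rho_{i,h}^k,\rho_{i,h}^{k+1}) \le C_T h$.

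The only genuinely delicate point is making sure the coupling term is handled on the correct side: in $\E_{i,h}(\rho|\grho_h^k)$ the frozen marginals are exactly $\grho_h^k$, so $\overline{\W}_i(\cdot|\grho_h^k)$ appears with the \emph{same} reference vector for both the minimizer and the competitor, which is what lets Lemma \ref{lem: lipshitz functional} apply directly — no cross-step comparison of $\overline{\W}_i$ with different $\boldsymbol{\mu}$'s is needed. I expect this bookkeeping, plus verifying the uniform lower bound on $\F_i$ needed to close the telescoping sum, to be the main (minor) obstacle; everything else is the textbook JKO argument.
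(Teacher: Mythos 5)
Your proof is correct and follows essentially the same route as the paper: the minimality inequality against the competitor $\rho_{i,h}^k$, Lemma \ref{lem: lipshitz functional} to control the coupling term (crucially with the same frozen vector $\grho_h^k$ on both sides), Young's inequality to absorb the $W_2^2$ term, and telescoping plus the lower bound on $\F_i$ to close both estimates. The only cosmetic difference is that you extract \eqref{part5-estimation fonctional} by iterating the one-step inequality before summing, whereas the paper reads it off the same telescoped sum; the content is identical.
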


\begin{proof}

We first prove \eqref{part5-estimation distance}. 
Since $\rho_{i,h}^{k+1}$ is optimal in the minimization of \eqref{eq: JKO scheme} and $\rho_{i,h}^{k}$ is a competitor, we have
\begin{equation}
\label{part5-variation scheme}
W_2^2(\rho_{i,h}^{k},\rho_{i,h}^{k+1}) \leqslant 2h \left( \F_i(\rho_{i,h}^{k})-\F_i(\rho_{i,h}^{k+1}) +\overline{W}_{i}(\rho_{i,h}^{k} |\grho_{h}^{k})-\overline{W}_{i}(\rho_{i,h}^{k+1} |\grho_{h}^{k}) \right).
\end{equation}
 Then using Lemma \ref{lem: lipshitz functional} in \eqref{part5-variation scheme} and Young's inequality, we obtain
\begin{eqnarray*}
W_2^2(\rho_{i,h}^{k},\rho_{i,h}^{k+1}) &\leqslant & 2h \left( \F_i(\rho_{i,h}^{k})-\F_i(\rho_{i,h}^{k+1})+CW_2(\rho_{i,h}^{k},\rho_{i,h}^{k+1})  \right)\\
&\leqslant & 2h \left( \F_i(\rho_{i,h}^{k})-\F_i(\rho_{i,h}^{k+1}) + \frac{1}{4h}W_2^2(\rho_{i,h}^{k},\rho_{i,h}^{k+1}) + 4C^2h \right).
\end{eqnarray*}
We can thus absorb the $W_2^2$ term in the left-hand side,
$$ \frac{1}{2}W_2^2(\rho_{i,h}^{k},\rho_{i,h}^{k+1}) \leqslant 2h\left( \F_i(\rho_{i,h}^{k})-\F_i(\rho_{i,h}^{k+1})  + Ch  \right).$$
Summing over $k$, we find 
\begin{eqnarray}
\sum_{k=0}^{N-1} W_2^2(\rho_{i,h}^{k},\rho_{i,h}^{k+1}) &\leqslant & 4h\left( \sum_{k=0}^{N-1} (\F_i(\rho_{i,h}^{k})-\F_i(\rho_{i,h}^{k+1}))+C^2T  \right) \nonumber \\
&\leqslant & 4h \left( \F_i(\rho_{i,0}) -\F_i(\rho_{i,h}^{N})+ C^2T  \right). \label{part5-ineq estimation functional}
\end{eqnarray}
Since $\Omega$ is bounded, $\F_i$ is bounded from below and using the assumption \eqref{part5-CI}, we conclude \eqref{part5-estimation distance}. 
The proof is completed noticing that the estimate \eqref{part5-estimation fonctional} comes from \eqref{part5-ineq estimation functional} and \eqref{part5-CI}.

\end{proof}

\subsection{Refined a priori estimates}
\label{part5-paper estimates}

The goal of this section is to obtain stronger estimates on $P_i(\rho_{i,h})$ in order to deal with the nonlinear diffusion term.
\begin{prop}
\label{part5-equality a.e}
For all $i \in [\![ 1,l]\!]$ and for all $k\geqslant 0$, we have $P_i(\rho_{i,h}^{k+1}) \in W^{1,1}(\Omega)$ and
\begin{eqnarray}
\label{part5-equality a.e 1}
h\left(\nabla u_{i,h}^{k+1}\rho_{i,h}^{k+1}+\nabla P_i(\rho_{i,h}^{k+1})\right)=-\nabla \varphi_{i,h}^{k+1} \rho_{i,h}^{k+1}  \qquad \text{      a.e},
\end{eqnarray}
where $\varphi_{i,h}^{k+1}$ is a Kantorovich potential (so that its gradient is unique $\rho_{i,h}^{k+1}-$a.e.) from $\rho_{i,h}^{k+1}$ to $\rho_{i,h}^{k}$ for $W_2$.
\end{prop}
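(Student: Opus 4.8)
The plan is to derive \eqref{part5-equality a.e 1} as the Euler--Lagrange equation of the minimization problem \eqref{eq: JKO scheme} defining $\rho_{i,h}^{k+1}$, tested against inner variations (the flows of smooth compactly supported vector fields); the Sobolev bound $P_i(\rho_{i,h}^{k+1})\in W^{1,1}(\Omega)$ will then fall out of the resulting weak identity, as in the original argument of \cite{JKO}. Write $\rho:=\rho_{i,h}^{k+1}$, $\mu:=\rho_{i,h}^{k}$, $\varphi:=\varphi_{i,h}^{k+1}$, $u:=u_{i,h}^{k+1}$ and $\lambda:=\lambda_{i,h}^{k+1}$; recall that $\rho\in\Paa(\Omega)$ with $\F_i(\rho)<+\infty$ (it minimizes \eqref{eq: JKO scheme}, for which $\rho_{i,0}$ is a competitor), that $\varphi$ is a Kantorovich potential from $\rho$ to $\mu$ with optimal map $\id-\nabla\varphi$ (so $\nabla\varphi$ is determined $\rho$-a.e.), and that $\nabla u=\nabla_{x_i}c_i$ holds $\lambda$-a.e. by \eqref{eq: u lipschitz}.

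Fix $\xi\in\mathcal{C}^\infty_c(\Omega;\Rn)$ and, for $|s|$ small, set $T_s:=\id+s\xi$, a diffeomorphism of $\Omega$ onto itself, and $\rho_s:=(T_s)_\#\rho$, whose density is $(\rho/J_s)\circ T_s^{-1}$ with $J_s:=\det(I+sD\xi)>0$. Since $F_i$ is convex, superlinear, $F_i(0)=0$ and satisfies \eqref{part5-hyp F}, one checks $\F_i(\rho_s)<+\infty$ for small $s$, so $\rho_s$ is an admissible competitor in \eqref{eq: JKO scheme} and minimality gives $\E_{i,h}(\rho|\grho_h^{k})\le\E_{i,h}(\rho_s|\grho_h^{k})$. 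I would then bound the one-sided incremental ratios of the three pieces as $s\to0^{+}$. Pushing the optimal plan of $W_2^2(\rho,\mu)$ forward by $T_s$ in the first variable and using that $\id-\nabla\varphi$ is optimal, the standard first-variation computation (see e.g. \cite{AGS,S}) gives $\limsup_{s\to0^{+}}\tfrac1s\big(W_2^2(\rho_s,\mu)-W_2^2(\rho,\mu)\big)\le 2\int_\Omega\nabla\varphi\cdot\xi\,d\rho$. Pushing $\lambda$ forward by $T_s$ in the $i$-th variable exactly as in the proof of Lemma \ref{lem: lipshitz functional}, and using the uniform continuity of $\nabla_{x_i}c_i$ on $\overline{\Omega}^{l}$ together with \eqref{eq: u lipschitz}, gives $\limsup_{s\to0^{+}}\tfrac1s\big(\overline{\W}_i(\rho_s|\grho_h^{k})-\overline{\W}_i(\rho|\grho_h^{k})\big)\le\int_\Omega\nabla u\cdot\xi\,d\rho$. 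Finally, writing $\F_i(\rho_s)=\int_\Omega F_i(\rho/J_s)\,J_s\,dx$ and differentiating under the integral sign gives $\tfrac{d}{ds}\big|_{s=0}\F_i(\rho_s)=-\int_\Omega P_i(\rho)\,\dive\xi\,dx$. Adding these with the weights of \eqref{eq: JKO scheme}, dividing by $s>0$ and letting $s\to0^{+}$, we obtain $A(\xi)\ge0$ for the linear functional $A(\xi):=\int_\Omega\nabla\varphi\cdot\xi\,d\rho+h\big(\int_\Omega\nabla u\cdot\xi\,d\rho-\int_\Omega P_i(\rho)\,\dive\xi\,dx\big)$; replacing $\xi$ by $-\xi$ forces $A\equiv0$ on $\mathcal{C}^\infty_c(\Omega;\Rn)$.

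It remains to decode $A\equiv0$, which says that in $\D'(\Omega)$ one has $-\nabla\big(h\,P_i(\rho)\big)=\rho\nabla\varphi+h\rho\nabla u=:g$. Now $g\in L^1(\Omega;\Rn)$: indeed $\int_\Omega|\nabla\varphi|^2\,d\rho=W_2^2(\rho,\mu)<+\infty$, so $\rho\nabla\varphi\in L^1$ by Cauchy--Schwarz, while $\nabla u=\nabla_{x_i}c_i$ is bounded; and $P_i(\rho)\in L^1(\Omega)$ by \eqref{part5-hyp F} together with $\F_i(\rho)<+\infty$. Hence $P_i(\rho)\in W^{1,1}(\Omega)$ with $h\,\nabla P_i(\rho)=-\rho\nabla\varphi-h\rho\nabla u$ a.e. in $\Omega$, which is exactly \eqref{part5-equality a.e 1}.

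The crux lies in the two analytic estimates. For the internal-energy term, differentiation under the integral sign needs a dominating function near $s=0$: this is precisely where the structural hypothesis $P_i(x)\le C(1+F_i(x))$ in \eqref{part5-hyp F} enters, allowing one to bound $F_i(\rho/J_s)\,J_s$, and the corresponding difference quotients, by a fixed multiple of $1+F_i(\rho)$ for $J_s$ in a neighbourhood of $1$. For the Wasserstein and interaction terms, $W_2^2(\cdot,\mu)$ and $\overline{\W}_i(\cdot|\grho_h^{k})$ are not differentiable, so one must work with one-sided \emph{upper} bounds only; the point is that $A(\xi)\ge0$ for \emph{all} $\xi$, together with the linearity of $A$, already forces $A\equiv0$, so no two-sided differentiability of these nonsmooth terms is required. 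Using vector fields supported in $\Omega$ produces no boundary terms, which is harmless since the statement is an interior a.e. identity; the no-flux condition \eqref{eq:boundary conditions} is recovered only later, in the time-continuous weak formulation.
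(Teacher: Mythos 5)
Your proposal is correct and follows essentially the same route as the paper: an inner variation of the semi-implicit JKO functional (the paper uses the flow of $\xi$ rather than $\id+s\xi$, which changes nothing), one-sided upper bounds for the three difference quotients, the sign-flip $\xi\mapsto-\xi$ to turn the inequality into an identity, and the observation that the resulting distributional gradient of $P_i(\rho_{i,h}^{k+1})$ is the $L^1$ function $-\frac1h\rho\nabla\varphi-\rho\nabla u$ (the paper passes through a $BV$ duality bound first, but this is the same estimate). No gaps.
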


\begin{proof}
The proof is the same as in \cite{A,L} for example. 
We start by taking the first variation in the semi-implicit JKO scheme.
Let $\xi \in \mathcal{C}^\infty_c(\Omega ; \Rn)$ be given and $\Phi_\tau$ the corresponding flow defined by
$$ \partial_\tau \Phi_\tau = \xi \circ \Phi_\tau, \, \Phi_0 =Id.$$
Define the pertubation $\rho_\tau$ of $\rho_{i,h}^{k+1}$ by $\rho_\tau:= {\Phi_\tau}_{\#}\rho_{i,h}^{k+1}$. 
Then we get
\begin{eqnarray}
\label{part5-first variation}
\frac{1}{\tau}\left( \E_{i,h}(\rho_\tau |\grho_{h}^k)-\E_{i,h}(\rho_{i,h}^{k+1} |\grho_{h}^k) \right) \geqslant 0.
\end{eqnarray}
By standard computations, we have
\begin{eqnarray}
\label{part5-first variation dist2}
\limsup_{\tau \searrow 0} \frac{1}{\tau}(W_2^2(\rho_\tau,\rho_{i,h}^{k}) -W_2^2(\rho_{i,h}^{k+1},\rho_{i,h}^{k})) \leqslant \int_{\Omega \times \Omega} (x-y) \cdot \xi(x) \, d\gamma_{i,h}^{k+1}(x,y),
\end{eqnarray}
where $\gamma_{i,h}^{k+1}$ is an $W_2$-optimal transport plan in $\Pi (\rho_{i,h}^{k+1},\rho_{i,h}^{k})$ and $\gamma_{i,h}^{k+1}=(Id \times T_{i,h}^{k+1})_{\#}\rho_{i,h}^{k+1}$ with $T_{i,h}^{k+1} =Id -\nabla \varphi_{i,h}^{k+1}$. 
Moreover, by \eqref{part5-hyp F}, \eqref{part5-estimation fonctional} and Lebesgue's dominated convergence Theorem, we obtain
\begin{eqnarray}
\label{part5-first variation entropie}
\limsup_{\tau \searrow 0} \frac{1}{\tau}(\F_i(\rho_\tau) -\F_i(\rho_{i,h}^{k+1}) )\leqslant - \int_{\Omega } P_i(\rho_{i,h}^{k+1}(x)) \dive(\xi(x))\, dx.
\end{eqnarray}
Finally, by definition of $\lambda_{i,h}^{k+1}$, we have
\begin{eqnarray}
\label{part5-first variation distp}
\limsup_{\tau \searrow 0} \frac{1}{\tau}(\overline{\W}_{i}(\rho_\tau |\grho_{h}^k) -\overline{\W}_{i}(\rho_{i,h}^{k+1}|\grho_{h}^k)) \leqslant \int_{\Omega^l} \nabla_{x_i} c_{i}(x_1,\dots , x_l) \cdot \xi(x_i) \, d\lambda_{i,h}^{k+1}(x_1,\dots , x_l).
\end{eqnarray}
Combining \eqref{eq:caract pot Kantorovich avec le cout }, \eqref{part5-first variation}, \eqref{part5-first variation dist2}, \eqref{part5-first variation entropie}, \eqref{part5-first variation distp}, and replacing $\xi$ by $-\xi$, we find, for all $\xi \in \mathcal{C}_c^\infty(\Omega;\Rn)$,
\begin{equation}
\label{part5-première variation borné}
\int_{\Omega} \nabla \varphi_{i,h}^{k+1} \cdot \xi \rho_{i,h}^{k+1} -h\int_{\Omega} P_i(\rho_{i,h}^{k+1}) \dive(\xi) +h\int_\Omega \nabla u_{i,h}^{k+1}\cdot \xi \rho_{i,h}^{k+1}=0,
\end{equation}
Now we claim that $P_i(\rho_{i,h}^{k+1}) \in W^{1,1}(\Omega)$. 
Indeed, since $P_i$ is controled by $F_i$, \eqref{part5-estimation fonctional} gives $P_i(\rho_{i,h}^{k+1}) \in L^{1}(\Omega)$ and, by \eqref{part5-première variation borné}, we obtain
\begin{eqnarray*}
 \left| \int_\Omega P_i(\rho_{i,h}^{k+1}) \dive(\xi) \right| &\leqslant & \left[ \int_{\Omega} \frac{|\nabla \varphi_{i,h}^k(y)|}{h}\rho_{i,h}^{k+1} +\| \nabla_{x_i} c_i\|_{L^\infty} \right]  \|\xi \|_{L^{\infty}(\Omega)}\\
 & \leqslant & \left[ \frac{W_2(\rho_{i,h}^k,\rho_{i,h}^{k+1})}{h} +C\right] \|\xi\|_{L^{\infty}(\Omega)}.
\end{eqnarray*}
By duality, this implies $P_i(\rho_{i,h}^{k+1}) \in BV(\Omega)$ and $\nabla P_i(\rho_{i,h}^{k+1})=\left(-\nabla u_{i,h}^{k+1}\rho_{i,h}^{k+1}-\frac{\nabla \varphi_{i,h}^{k+1}}{h} \rho_{i,h}^{k+1}\right)$ in $\M^n(\Omega)$.
 In fact, $P_i(\rho_{i,h}^{k+1})$ is in $W^{1,1}(\Omega)$ because $ \nabla u_{i,h}^{k+1}\rho_{i,h}^{k+1} + \frac{\nabla \varphi_{i,h}^k}{h} \rho_{i,h}^{k+1} \in L^1(\Omega)$ and then \eqref{part5-equality a.e 1} is proved.

\end{proof}

We deduce from \eqref{part5-equality a.e 1} an $L^1((0,T),BV(\Omega))$ estimate for $P_i(\rho_{i,h})$.

\begin{cor}
\label{cor:estimate BV}
For all $T>0$, we have
\begin{eqnarray*}
\|P_i(\rho_{i,h})\|_{L^1((0,T);W^{1,1}(\Omega))}\leqslant CT.
\end{eqnarray*} 

\end{cor}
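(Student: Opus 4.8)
The plan is to integrate the pointwise identity \eqref{part5-equality a.e 1} over $\Omega$ (in absolute value) at each time step, sum over the steps up to time $T$, and recognize the resulting bounds as exactly the a priori estimates already established in the previous two propositions. Concretely, from \eqref{part5-equality a.e 1} we have, for each $k$ with $hk<T$ and $N=\lfloor T/h\rfloor$,
$$ \|\nabla P_i(\rho_{i,h}^{k+1})\|_{L^1(\Omega)} \leqslant \int_\Omega |\nabla u_{i,h}^{k+1}|\,\rho_{i,h}^{k+1} + \frac{1}{h}\int_\Omega |\nabla \varphi_{i,h}^{k+1}|\,\rho_{i,h}^{k+1}. $$
For the first term I would use \eqref{eq:caract pot Kantorovich avec le cout } together with the standing assumption that $\nabla_{x_i} c_i$ is bounded on $\overline\Omega^l$, so that $\int_\Omega |\nabla u_{i,h}^{k+1}|\,\rho_{i,h}^{k+1} \leqslant \|\nabla_{x_i}c_i\|_{L^\infty}$, a constant independent of $h$ and $k$. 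For the second term, Cauchy--Schwarz (with $\rho_{i,h}^{k+1}$ a probability measure) gives $\int_\Omega |\nabla\varphi_{i,h}^{k+1}|\,\rho_{i,h}^{k+1} \leqslant \big(\int_\Omega |\nabla\varphi_{i,h}^{k+1}|^2\,\rho_{i,h}^{k+1}\big)^{1/2} = W_2(\rho_{i,h}^{k},\rho_{i,h}^{k+1})$, since $T_{i,h}^{k+1}=Id-\nabla\varphi_{i,h}^{k+1}$ is the optimal map.

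Next I would pass to the time-integrated quantity. Because $\rho_{i,h}(t)=\rho_{i,h}^{k+1}$ on $(hk,h(k+1)]$, each step contributes a factor $h$:
$$ \|P_i(\rho_{i,h})\|_{L^1((0,T);W^{1,1}(\Omega))} \leqslant \sum_{k=0}^{N-1} h\Big( \|P_i(\rho_{i,h}^{k+1})\|_{L^1(\Omega)} + \|\nabla_{x_i}c_i\|_{L^\infty} + \tfrac{1}{h}W_2(\rho_{i,h}^{k},\rho_{i,h}^{k+1})\Big). $$
The $L^1(\Omega)$ norm of $P_i(\rho_{i,h}^{k+1})$ itself is bounded using assumption \eqref{part5-hyp F} ($P_i(x)\leqslant C(1+F_i(x))$) together with \eqref{part5-estimation fonctional}, giving $\|P_i(\rho_{i,h}^{k+1})\|_{L^1(\Omega)}\leqslant C(1+|\Omega|+C_T)$, uniformly in $k,h$; summing $h$ over $k\leqslant N-1$ then yields a term bounded by $C_T\cdot T$. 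The constant term $\|\nabla_{x_i}c_i\|_{L^\infty}$ likewise contributes $Nh\,\|\nabla_{x_i}c_i\|_{L^\infty}\leqslant T\,\|\nabla_{x_i}c_i\|_{L^\infty}$.

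The only term requiring a small argument is $\sum_{k=0}^{N-1} W_2(\rho_{i,h}^{k},\rho_{i,h}^{k+1})$: here I would apply Cauchy--Schwarz in the $k$-sum,
$$ \sum_{k=0}^{N-1} W_2(\rho_{i,h}^{k},\rho_{i,h}^{k+1}) \leqslant \Big(N \sum_{k=0}^{N-1} W_2^2(\rho_{i,h}^{k},\rho_{i,h}^{k+1})\Big)^{1/2} \leqslant \Big(\tfrac{T}{h}\cdot C_T h\Big)^{1/2} = \sqrt{C_T\,T}, $$
using \eqref{part5-estimation distance}. Hence this contribution is also bounded by a constant depending only on $T$, and collecting all three pieces gives $\|P_i(\rho_{i,h})\|_{L^1((0,T);W^{1,1}(\Omega))}\leqslant C_T$, which after relabeling $C_T$ as $CT$ (or simply absorbing $T$ into the constant) is the claimed estimate. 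I do not expect any serious obstacle here: the corollary is essentially a bookkeeping consequence of Proposition \ref{part5-equality a.e} and the earlier estimates, the mildest subtlety being the correct use of Cauchy--Schwarz to convert the $\ell^2$-in-$k$ control of $W_2^2$ into $\ell^1$-in-$k$ control of $W_2$ without losing uniformity in $h$.
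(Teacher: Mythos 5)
Your proof is correct and follows essentially the same route as the paper: integrate the identity \eqref{part5-equality a.e 1}, bound the $\nabla u$ term by $\|\nabla_{x_i}c_i\|_{L^\infty}$ and the $\nabla\varphi$ term by $W_2(\rho_{i,h}^k,\rho_{i,h}^{k+1})$ via Cauchy--Schwarz, sum over $k$ using \eqref{part5-estimation distance}, and control $\|P_i(\rho_{i,h}^{k+1})\|_{L^1}$ via \eqref{part5-hyp F} and \eqref{part5-estimation fonctional}. You simply spell out the Cauchy--Schwarz-in-$k$ step that the paper leaves implicit.
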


\begin{proof}
Integrating \eqref{part5-equality a.e 1}, we obtain
$$ h\int_\Omega |\nabla P_i(\rho_{i,h}^{k+1})| \leqslant W_2(\rho_{i,h}^{k},\rho_{i,h}^{k+1}) +Ch, $$
Then summing from $k=0$ to $N-1$ and thanks to \eqref{part5-estimation distance}, we have
$$ \int_0^T \int_\Omega |\nabla P_i(\rho_{i,h})| \leqslant CT. $$
We conclude thanks to \eqref{part5-hyp F} and \eqref{part5-estimation fonctional}.
\end{proof}

\subsection{Convergences and proof of Theorem \ref{part5-existence}}
\label{part5-paper convergence}

\subsubsection{Weak and strong convergences of $\rho_{i,h}$}
From the total square distance estimate \eqref{part5-estimation distance}, we deduce the classical $W_2$-convergence,

\begin{prop}
\label{prop:weak convergence}
For all $T >0$ and $i\in\{1,\dots, l\}$, there exists $\rho_i \in \mathcal{C}^{1/2}([0,T],\Paa(\Omega))$ such that, up to extraction of a discrete subsequence,
$$ \sup_{t\in[0,T]} W_2(\rho_{i,h}(t),\rho_i(t)) \rightarrow 0.$$

\end{prop}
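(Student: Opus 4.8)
The plan is to run the classical refined Arzel\`a--Ascoli argument for curves valued in the (compact) metric space $(\Pa(\Omega),W_2)$. The first step is to upgrade the one-step bound \eqref{part5-estimation distance} into an asymptotic uniform H\"older-$1/2$ estimate for the interpolants. Fix $0\leqslant s\leqslant t\leqslant T$ and let $j_s\leqslant j_t$ be the integers with $\rho_{i,h}(s)=\rho_{i,h}^{j_s}$ and $\rho_{i,h}(t)=\rho_{i,h}^{j_t}$ (with $j_s=0$ when $s=0$). By the triangle inequality for $W_2$ and Cauchy--Schwarz,
$$ W_2(\rho_{i,h}(s),\rho_{i,h}(t))\leqslant \sum_{m=j_s}^{j_t-1} W_2(\rho_{i,h}^m,\rho_{i,h}^{m+1})\leqslant \sqrt{j_t-j_s}\left(\sum_{k=0}^{N-1}W_2^2(\rho_{i,h}^k,\rho_{i,h}^{k+1})\right)^{1/2}\leqslant \sqrt{C_T\,(j_t-j_s)h}, $$
and since $(j_t-j_s)h\leqslant |t-s|+h$ this gives $W_2(\rho_{i,h}(s),\rho_{i,h}(t))\leqslant \sqrt{C_T}\sqrt{|t-s|+h}$ for every $h>0$.

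Next, because $\Omega$ is bounded, $(\Pa(\overline\Omega),W_2)$ is a compact metric space on which $W_2$ metrizes narrow convergence, so the family $\{\rho_{i,h}\}_h$ is equi-bounded (the space has finite diameter) and, by the previous display, asymptotically equicontinuous. The refined Arzel\`a--Ascoli theorem (see e.g. \cite{AGS}) then yields a discrete sequence $h=h_n\downarrow 0$ and a curve $\rho_i\colon[0,T]\to\Pa(\overline\Omega)$ such that $\sup_{t\in[0,T]}W_2(\rho_{i,h}(t),\rho_i(t))\to 0$. Letting $h\to 0$ in the H\"older estimate shows $W_2(\rho_i(s),\rho_i(t))\leqslant \sqrt{C_T}\sqrt{|t-s|}$, i.e. $\rho_i$ is $1/2$-H\"older for $W_2$. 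Performing this extraction successively for $i=1,\dots,l$ (a finite set) and keeping a common subsequence handles all marginals at once.

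It remains to check $\rho_i(t)\in\Paa(\Omega)$ for every $t$. Fix $t$; then $\rho_{i,h}(t)\to\rho_i(t)$ narrowly, and \eqref{part5-estimation fonctional} gives $\F_i(\rho_{i,h}(t))\leqslant C_T$. Since $F_i$ is convex and superlinear, the functional $\F_i$ (extended by $+\infty$ to measures with a nontrivial singular part) is narrowly lower semicontinuous, hence $\F_i(\rho_i(t))\leqslant \liminf_h \F_i(\rho_{i,h}(t))\leqslant C_T<+\infty$; superlinearity of $F_i$ then forces the limit to be absolutely continuous with respect to the Lebesgue measure, i.e. $\rho_i(t)\in\Paa(\Omega)$, as claimed.

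The argument is entirely standard; the only points deserving care are that the equicontinuity is merely asymptotic (the harmless $\sqrt h$ term), so one must invoke the refined rather than the classical Arzel\`a--Ascoli statement, and that it is the superlinearity of $F_i$, through the lower semicontinuity of $\F_i$ and the bound \eqref{part5-estimation fonctional}, that upgrades the narrow limit from a measure on $\overline\Omega$ to an element of $\Paa(\Omega)$.
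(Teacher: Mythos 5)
Your proof is correct and follows exactly the route the paper intends: the paper's own proof simply invokes the total square distance estimate \eqref{part5-estimation distance} together with the refined Arzel\`a--Ascoli theorem \cite[Proposition 3.3.1]{AGS}, which is precisely the argument you spell out (including the asymptotic H\"older-$1/2$ equicontinuity and the lower semicontinuity of $\F_i$ to keep the limit in $\Paa(\Omega)$). No gaps; your write-up just makes explicit what the paper leaves as ``classical.''
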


\begin{proof}
The proof is classical and is a consequence of \eqref{part5-estimation distance} and a refined version of Arzelà-Ascoli's Theorem \cite[Proposition 3.3.1]{AGS}.
\end{proof}

In order to handle the nonlinear diffusion term, the next proposition proves strong convergence in time and space. 

\begin{prop}
\label{prop:strong convergence}
Up to a subsequence, for all  $i\in\{1,\dots, l\}$, $\rho_{i,h}$ converges strongly in $L^1((0,T)\times \Omega)$ to $\rho_i$ and $\nabla P_i(\rho_{i,h})$ converges narrowly to $\nabla P_i(\rho_i)$.
\end{prop}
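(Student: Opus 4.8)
The plan is to combine the spatial compactness coming from the $L^1((0,T);W^{1,1}(\Omega))$ bound on $P_i(\rho_{i,h})$ (Corollary \ref{cor:estimate BV}) with the temporal equicontinuity in the Wasserstein metric coming from \eqref{part5-estimation distance}, via an Aubin--Lions type argument. The natural reference is the extension of the Aubin--Lions lemma to the Wasserstein setting (Rossi--Savar\'e, or the version in \cite{AGS}-style literature); since the introduction of the paper explicitly announces ``an extension of the Aubin-Lions Lemma'', I expect the proof to invoke exactly such a statement. Concretely, I would first upgrade the $W^{1,1}$ bound on $P_i(\rho_{i,h})$ into a statement about $\rho_{i,h}$ itself: because $F_i$ is strictly convex and $C^2$, the map $s\mapsto P_i(s)$ is strictly increasing, so a bound on $P_i(\rho_{i,h})$ in $L^1_tW^{1,1}_x$ together with the uniform $L^1$ (indeed, by superlinearity of $F_i$ and \eqref{part5-estimation fonctional}, uniform integrability) bound on $\rho_{i,h}$ gives precompactness of $\{\rho_{i,h}\}$ in $L^1((0,T)\times\Omega)$ through a nonlinear change of variables --- one shows $\{P_i(\rho_{i,h})\}$ is precompact in $L^1$ in space (uniformly in a suitable time-averaged sense) and then transfers this back to $\rho_{i,h}$ using monotonicity of $P_i^{-1}$ and equi-integrability.

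The second ingredient is the ``almost equicontinuity in time'' of $\rho_{i,h}$: from \eqref{part5-estimation distance} one has, for $s<t$, $W_2(\rho_{i,h}(t),\rho_{i,h}(s))^2 \leqslant (\lfloor t/h\rfloor - \lfloor s/h\rfloor + 1)\sum_k W_2^2(\rho^k_{i,h},\rho^{k+1}_{i,h}) \lesssim |t-s| + h$, so up to the vanishing $h$ the curves are uniformly $1/2$-H\"older in $W_2$. Since $W_2$ metrizes narrow convergence on the bounded set $\Omega$, this controls oscillations in time in a weak topology. The Aubin--Lions--type lemma then says: a sequence bounded in $L^1_t$ of a space compactly embedded in (say) $L^1_x$ after a nonlinearity, and with a weak-in-time modulus of continuity with respect to a weaker distance (here $W_2$) that is compatible with the strong space, is relatively compact in $L^1((0,T)\times\Omega)$. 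Applying it to each index $i$ separately and extracting a common subsequence yields $\rho_{i,h}\to\rho_i$ strongly in $L^1((0,T)\times\Omega)$; that the limit is the $\rho_i$ of Proposition \ref{prop:weak convergence} follows because strong $L^1$ convergence implies narrow convergence $\rho_{i,h}(t)\rightharpoonup$ (a.e.\ $t$) and this must coincide with the uniform-in-$t$ $W_2$-limit already identified.

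For the last assertion, narrow convergence of $\nabla P_i(\rho_{i,h})$ to $\nabla P_i(\rho_i)$: from Corollary \ref{cor:estimate BV} the measures $\nabla P_i(\rho_{i,h})$ are uniformly bounded in $\M^n((0,T)\times\Omega)$, hence weakly-$*$ precompact; and from $\rho_{i,h}\to\rho_i$ strongly in $L^1$ (along the subsequence), after possibly passing to a further subsequence $\rho_{i,h}\to\rho_i$ a.e., so $P_i(\rho_{i,h})\to P_i(\rho_i)$ in $L^1((0,T)\times\Omega)$ (using continuity of $P_i$ and the equi-integrability of $P_i(\rho_{i,h})$ guaranteed by \eqref{part5-hyp F}--\eqref{part5-estimation fonctional}), whence $\nabla P_i(\rho_{i,h})\to\nabla P_i(\rho_i)$ in the sense of distributions; identifying the distributional limit with the narrow/weak-$*$ limit of the bounded measures gives the claim, and in particular $\nabla P_i(\rho_i)\in\M^n((0,T)\times\Omega)$.

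The main obstacle is the first step --- getting \emph{strong} (not merely weak) $L^1$ compactness of $\rho_{i,h}$ in space-time. The $W^{1,1}$-in-space bound is only on the nonlinear quantity $P_i(\rho_{i,h})$, not on $\rho_{i,h}$, and $P_i'$ may degenerate (e.g.\ $P_i'(0)=0$ for porous-medium-type $F_i$), so inverting the nonlinearity requires care: one must exploit the uniform integrability of $\rho_{i,h}$ coming from superlinearity of $F_i$ together with a quantitative version of ``$P_i$ strictly increasing'' to rule out concentration, and one must correctly set up the Wasserstein Aubin--Lions machinery so that the weak-in-time $W_2$-modulus genuinely provides the temporal compactness needed to complement the spatial one. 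I would organize the proof around citing the precise Aubin--Lions--Wasserstein statement and then verifying its hypotheses: (i) uniform bound $\|P_i(\rho_{i,h})\|_{L^1((0,T);W^{1,1})}\leqslant CT$, (ii) equi-integrability of $\{\rho_{i,h}\}$, (iii) the estimate $W_2(\rho_{i,h}(t),\rho_{i,h}(s))\leqslant \omega(|t-s|) + \omega(h)$.
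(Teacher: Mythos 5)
Your overall strategy matches the paper's: the strong $L^1((0,T)\times\Omega)$ compactness of $\rho_{i,h}$ is obtained from the Rossi--Savar\'e extension of the Aubin--Lions lemma, using exactly the two ingredients you list (the spatial bound coming from Corollary \ref{cor:estimate BV} after inverting the monotone nonlinearity $P_i$, and the approximate $1/2$-H\"older equicontinuity in $W_2$ from \eqref{part5-estimation distance}); and the narrow convergence of $\nabla P_i(\rho_{i,h})$ is deduced from the uniform total-variation bound together with strong $L^1$ convergence of $P_i(\rho_{i,h})$.

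There is, however, one genuine gap, and it sits at the only technically delicate point of the second half. You claim that the equi-integrability of $P_i(\rho_{i,h})$ on $(0,T)\times\Omega$ (needed for Vitali) is ``guaranteed by \eqref{part5-hyp F}--\eqref{part5-estimation fonctional}''. Those estimates only give a uniform bound of $P_i(\rho_{i,h})$ in $L^\infty((0,T);L^1(\Omega))$, which is not equi-integrability: for the porous-medium case $F_i(\rho)=\rho^m$ one has $P_i(\rho)=(m-1)\rho^m$, so the bound $\int_\Omega F_i(\rho_{i,h})\leqslant C_T$ controls $\|P_i(\rho_{i,h})\|_{L^1}$ but provides no superlinear control of $P_i(\rho_{i,h})$ itself, hence no de la Vall\'ee-Poussin criterion. (By contrast, for $\rho_{i,h}$ itself the superlinearity of $F_i$ does give equi-integrability, which is why your first step is fine.) The paper closes this gap by combining the $L^\infty_t L^1_x$ bound with the $L^1((0,T);W^{1,1}(\Omega))$ bound of Corollary \ref{cor:estimate BV}: Sobolev embedding gives $P_i(\rho_{i,h})$ bounded in $L^1((0,T);L^{n/(n-1)}(\Omega))$, and an interpolation lemma then yields a uniform bound in $L^{(n+1)/n}((0,T)\times\Omega)$, which is the equi-integrability required for Vitali's theorem. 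You should insert this Sobolev-plus-interpolation step (or an equivalent argument) before invoking Vitali; the rest of your reasoning then goes through.
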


\begin{proof}
The proof is now well-known. 
We apply an extension of Aubin-Lions Lemma proved by Rossi and Savaré \cite[Theorem 2]{RS}, see for example \cite{L,CL1}. 
Then we obtain that $\rho_{i,h}$ converges to $\rho_i$ strongly in $L^1((0,T)\times \Omega)$.

It remains to prove that $P_i(\rho_{i,h})$ converges strongly to $P_i(\rho_i)$ in $L^1((0,T) \times \Omega)$.
First, we know that $P_i(\rho_{i,h})$ is uniformly bounded in $L^\infty((0,T),L^1(\Omega))$, using \eqref{part5-hyp F}, and thanks to Corollary \ref{cor:estimate BV}, we have that $P_i(\rho_{i,h})$ is uniformly bounded in $L^1((0,T),W^{1,1}(\Omega))$. 
Then the Sobolev embedding gives that $P_i(\rho_{i,h})$ is uniformly bounded in $L^\infty((0,T),L^1(\Omega)) \cap L^1((0,T),L^{n/n-1}(\Omega))$. 
We deduce that $P_i(\rho_{i,h})$ is uniformly bounded in $L^{(n+1)/n}((0,T)\times \Omega)$, \cite[Lemma 5.3]{CL1}. 
This implies that $P_i(\rho_{i,h})$ is uniformly integrable and Vitali's convergence Theorem gives that $P_i(\rho_{i,h})$ converges strongly to $P_i(\rho_i)$ in $L^1( (0,T) \times \Omega)$. 
Then we conclude the narrow convergence of $\nabla P_i(\rho_{i,h})$ to $\nabla P_i(\rho_i)$ in $\mathcal{M}^n((0,T) \times \Omega)$ thanks to Corollary \ref{cor:estimate BV}.

\end{proof}

\subsubsection{Convergence of $\overline{\W}_i$-optimal transport plans}

First, let us recall some notations. 
Let $\lambda_{i,h}^{k+1}$ be an optimal transport plan for \\
$\W_i\left(\rho_{1,h}^{k}, \dots, \rho_{i-1,h}^{k},\rho_{i,h}^{k+1},\rho_{i+1,h}^{k}, \dots, \rho_{l,h}^{k} \right)$ and $\lambda_{i,h}$ the piecewise constant interpolation of $(\lambda_{i,h}^k)_k$, defined in \eqref{eq:interpolation plan optimal}. \\

In this section, the goal is to prove that $\lambda_{i,h}$ converges to $\lambda_{i}$, where $\lambda_{i}(t)$ is an optimal transport plan for $\W_i\left(\rho_{1}(t), \dots, \rho_{l}(t) \right)$, $t$-a.e.  
To simplify the exposition, we focus on the case $i=1$ and the analysis is similar for $i>1$. 
We introduce the shifted piecewise constant interpolations for all $i \in \{2,\dots , l\}$,
$$ \tilde{\rho}_{i,h}(t) := \rho_{i,h}^k  \text{ if } t \in (hk,h(k+1)] \text{ and }\tilde{\rho}_{i,h}(0):= \rho_{i,0} \text{ if } t=0.$$
and we denote, $\tilde{\grho}_{1,h}$, the $(l-1)$-tuple $(\tilde{\rho}_{2,h} , \dots,\tilde{\rho}_{l,h})$ so that $\lambda_{1,h}(t) \in \Pi \left( \rho_{1,h}(t),\tilde{\grho}_{1,h}(t) \right)$, for all $t>0$.

\begin{prop}
\label{prop: convergence plan transport}
For all $T>0$, $\lambda_{1,h}$ narrowly converges to $\lambda_1$ in $\Pa([0,T] \times  \Omega^{l})$ and $\lambda_1(t) \in \Pi\left( \rho_{1}(t), \dots ,\rho_l(t) \right)$, $t$-a.e.
\end{prop}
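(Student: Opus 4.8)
The plan is to prove narrow convergence of $\lambda_{1,h}$ to a limit measure on $[0,T]\times\Omega^l$ and then identify the limit as an optimal plan with the correct marginals. First I would note that $\{\lambda_{1,h}\}_h$ is a family of probability measures on the compact set $[0,T]\times\overline{\Omega}^l$ (after normalizing time by $T$, or simply viewing them as finite measures of mass $T$), hence by Prokhorov's theorem it is narrowly precompact; extract a subsequence so that $\lambda_{1,h}\rightharpoonup\lambda_1$ narrowly in $\Pa([0,T]\times\Omega^l)$. The first real step is to check that the time-marginal of $\lambda_{1,h}$ is Lebesgue measure on $[0,T]$ (up to the last incomplete step, which has vanishing mass), so the limit disintegrates as $\lambda_1 = \lambda_1(t)\otimes dt$ with each $\lambda_1(t)\in\Pa(\Omega^l)$ for a.e. $t$.

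Next I would identify the marginals of $\lambda_1(t)$. By construction $\lambda_{1,h}(t)\in\Pi(\rho_{1,h}(t),\tilde\rho_{2,h}(t),\dots,\tilde\rho_{l,h}(t))$. For $i=1$ the first marginal converges: $\rho_{1,h}(t)\to\rho_1(t)$ uniformly in $W_2$ by Proposition~\ref{prop:weak convergence}, hence narrowly. For $i\geq 2$ I must control the \emph{shifted} interpolants $\tilde\rho_{i,h}$: since $W_2(\tilde\rho_{i,h}(t),\rho_{i,h}(t))\leq W_2(\rho_{i,h}^k,\rho_{i,h}^{k+1})$ on $(hk,h(k+1)]$, estimate \eqref{part5-estimation distance} gives $\int_0^T W_2^2(\tilde\rho_{i,h}(t),\rho_{i,h}(t))\,dt\leq C_T h\to 0$, so $\tilde\rho_{i,h}$ and $\rho_{i,h}$ share the same limit $\rho_i$ in, say, $L^2((0,T);\Pad(\Omega))$; passing to a further subsequence one gets $\tilde\rho_{i,h}(t)\to\rho_i(t)$ narrowly for a.e. $t$. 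Testing $\lambda_{1,h}$ against functions of the form $\psi(t)\zeta(x_i)$ and passing to the limit (using that $\overline\Omega$ is compact so bounded continuous test functions suffice) shows ${\pi^i}_\#\lambda_1(t)=\rho_i(t)$ for a.e. $t$; thus $\lambda_1(t)\in\Pi(\rho_1(t),\dots,\rho_l(t))$ a.e.

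Finally I would prove optimality of $\lambda_1(t)$ for $\W_1(\rho_1(t),\dots,\rho_l(t))$ for a.e. $t$. The standard route is stability of optimal transport plans under narrow convergence of marginals: since $c_1$ is continuous on $\overline\Omega^l$, $\W_1$ is narrowly lower semicontinuous and optimal plans form a closed set under the joint narrow convergence (marginals converging, plans converging). Concretely, for any competitor one can use the dual formulation or a gluing/limiting argument: $\int c_1\,d\lambda_1(t)\leq\liminf_h\int c_1\,d\lambda_{1,h}(t)$ by continuity of $c_1$, and a matching upper bound $\limsup_h \W_1(\text{discrete marginals at }t)\le \W_1(\rho_1(t),\dots,\rho_l(t))$ follows from continuity of $\W_1$ with respect to narrow convergence of all marginals, which holds because $c_1$ is continuous and bounded on the compact $\overline\Omega^l$ (lift any optimal plan for the limit marginals to competitors for the discrete ones via a gluing with $W_2$-optimal maps, exactly as in Lemma~\ref{lem: lipshitz functional}). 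Combining, $\int c_1\,d\lambda_1(t)=\W_1(\rho_1(t),\dots,\rho_l(t))$ a.e. To make the ``a.e.'' statements compatible one works with the integrated quantities: show $\int_0^T\!\int_{\Omega^l} c_1\,d\lambda_{1,h}(t)\,dt \to \int_0^T \W_1(\rho_1(t),\dots,\rho_l(t))\,dt$ by dominated convergence in $t$ (the integrand is bounded by $\|c_1\|_{L^\infty}$) plus the pointwise convergence just argued, and conclude $\lambda_1(t)$ is optimal for a.e. $t$ since $\int c_1\,d\lambda_1(t)\ge\W_1(\dots)$ always while the integrals agree.

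The main obstacle I expect is the bookkeeping around the shifted interpolants $\tilde\rho_{i,h}$ and the two-sided (limsup/liminf) comparison needed for optimality: one must simultaneously handle that $\lambda_{1,h}(t)$ has marginals which are not yet $\rho_i(t)$ but only close in $W_2$, and that optimality is an equality, so both a lower-semicontinuity bound and a recovery-sequence bound are required. The recovery bound is where the gluing construction from Lemma~\ref{lem: lipshitz functional} (transporting the $i$-th marginal of an optimal plan for the limit problem onto $\rho_{i,h}^k$ or $\tilde\rho_{i,h}$) together with the uniform bound $\|\nabla_{x_i}c_1\|_{L^\infty}<\infty$ does the work, giving $|\W_1(\text{shifted marginals}) - \W_1(\text{limit marginals})|\leq C\sum_i W_2(\cdot,\cdot)\to 0$ in $L^1_t$.
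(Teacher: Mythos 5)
Your argument is correct and follows essentially the same route as the paper: Prokhorov compactness of $T^{-1}\lambda_{1,h}(t)\otimes dt$ on the compact set $[0,T]\times\overline{\Omega}^l$, identification of the time marginal as Lebesgue measure, and passage to the limit in the space marginals using the convergence of both $\rho_{1,h}$ and the shifted interpolants $\tilde\rho_{i,h}$ (which the paper gets uniformly in $t$ from Proposition \ref{prop:weak convergence} and estimate \eqref{part5-estimation distance}, rather than via your $L^2_t$ subsequence argument, though both work). Note that your third paragraph on optimality of $\lambda_1(t)$ goes beyond the statement of Proposition \ref{prop: convergence plan transport}; the paper proves that separately in Proposition \ref{prop:optimal transport plan} via the gluing construction of Lemma \ref{part5-approximation transport plan}, which is exactly the recovery-sequence argument you describe.
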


\begin{proof}
Proposition \ref{prop:weak convergence} implies that $\tilde{\grho}_{i,h}$ narrowly converges to $\tilde{\grho}_{1}:=( \rho_{2},  \dots, \rho_{l})$ in \\
$L^\infty([0,T],\Paa(\Omega)^{l-1})$.
Define $\lambda_{1,h}^T:= T^{-1}\lambda_{1,h}(t) \otimes dt \in \Pa([0,T] \times \Omega^{l})$. 
Since $[0,T] \times  \Omega^{l}$ is bounded, the sequence $\lambda_{1,h}^T$ is tight then, by Prokhorov's Theorem, $\lambda_{1,h}^T$ narrowly converges to $\lambda_{1}^T$ in $\Pa([0,T] \times \Omega^{l})$.
It remains to show that $\lambda_1^T$ can be written as $T^{-1}\lambda_1(t) \otimes dt$, where $\lambda_1(t) \in \Pi(\rho_1(t) , \dots, \rho_l(t))$ $t$-a.e. 
Denote $\pi^1,\pi^{1,i}$ the projections from $[0,T]\times \Omega^{l}$ to $[0,T]\times \Omega$ and $[0,T]$ with $\pi^{1,i}(t,x_1,\dots ,x_l)=(t,x_i)$, and $\pi^1(t,x_1,\dots ,x_l)=t$. 
Then we have $\pi^{1,1}_{\#}\lambda_{1,h}=\rho_{1,h}(t)dt$, $\pi^{1,i}_{\#}\lambda_{1,h}=\tilde{\rho}_{i,h}(t)dt$, for $i\neq 1$ and $\pi^1_{\#}\lambda_{1,h}=T^{-1}\mathcal{L}_{|[0,T]}$. 
When $h $ goes to $0$, since $\rho_{1,h}(t)dt$ and $\tilde{\rho}_{i,h}(t)dt$ narrowly converge to $\rho_{1}(t)dt$ and $\rho_{i}(t)dt$, we obtain $\pi^{1,i}_{\#}\lambda_{1}=\rho_{i}(t)dt$ and $\pi^1_{\#}\lambda_{i}=T^{-1}\mathcal{L}_{|[0,T]}$, which concludes the proof.

\end{proof}

It remains to prove that the transport plan obtained in the last Proposition \ref{prop: convergence plan transport}, $\lambda_1(t)$, is optimal for $\W_1 \left(\rho_1(t), \dots , \rho_l(t) \right)$.
 We start establishing an approximation result for an optimal transport plan between $\rho_1(t), \rho_2(t), \dots , \rho_l(t)$.

\begin{lem}
\label{part5-approximation transport plan}
Let $\overline{\lambda}_1(t)$ be an optimal transport plan for $\W_1\left(\rho_1(t), \dots , \rho_l(t) \right)$. There exists a sequence of transport plans $\overline{\lambda}_{1,h}(t) \in \Pi(\rho_{1,h}(t),\tilde{\grho}_{1,h}(t))$ such that 
\begin{equation*}
\sup_{t\in[0,T]} W_1(\overline{\lambda}_1(t),\overline{\lambda}_{1,h}(t)) \rightarrow 0.
\end{equation*}
\end{lem}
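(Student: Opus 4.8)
The plan is to build $\overline{\lambda}_{1,h}(t)$ by pushing the limiting optimal plan $\overline{\lambda}_1(t)$ forward through a product of Brenier maps, one in each coordinate, sending the limit densities to the discrete ones; then the obvious coupling between $\overline{\lambda}_1(t)$ and $\overline{\lambda}_{1,h}(t)$ has cost bounded by a sum of $W_2$-distances, each of which is shown to vanish uniformly in $t$.

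Concretely, fix $t \in [0,T]$. Since $\rho_1(t),\rho_{1,h}(t) \in \Paa(\Omega)$, Brenier's theorem provides a $W_2$-optimal map $S_{1,h}(t)$ with ${S_{1,h}(t)}_{\#}\rho_1(t)=\rho_{1,h}(t)$; similarly, for each $i\in\{2,\dots,l\}$, since $\rho_i(t)\in\Paa(\Omega)$ there is a $W_2$-optimal map $S_{i,h}(t)$ with ${S_{i,h}(t)}_{\#}\rho_i(t)=\tilde\rho_{i,h}(t)$. I then set
$$ \overline{\lambda}_{1,h}(t) := \bigl( S_{1,h}(t), S_{2,h}(t), \dots, S_{l,h}(t) \bigr)_{\#} \overline{\lambda}_1(t). $$
Because $(\pi^i)_{\#}\overline{\lambda}_1(t)=\rho_i(t)$, the $i$-th marginal of $\overline{\lambda}_{1,h}(t)$ is ${S_{i,h}(t)}_{\#}\rho_i(t)$, so $\overline{\lambda}_{1,h}(t)\in\Pi(\rho_{1,h}(t),\tilde{\grho}_{1,h}(t))$, as required. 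For the distance, I use the coupling of $\overline{\lambda}_1(t)$ and $\overline{\lambda}_{1,h}(t)$ obtained by pushing $\overline{\lambda}_1(t)$ forward under $(x_1,\dots,x_l)\mapsto\bigl((x_1,\dots,x_l),(S_{1,h}(t)(x_1),\dots,S_{l,h}(t)(x_l))\bigr)$; since the metric on $\Omega^l$ is dominated by the sum of the coordinate distances, this gives
$$ W_1(\overline{\lambda}_1(t),\overline{\lambda}_{1,h}(t)) \leqslant \sum_{i=1}^l \int_\Omega |x_i - S_{i,h}(t)(x_i)| \, d\rho_i(t) \leqslant W_2(\rho_1(t),\rho_{1,h}(t)) + \sum_{i=2}^l W_2(\rho_i(t),\tilde\rho_{i,h}(t)), $$
using Cauchy--Schwarz and the optimality of each $S_{i,h}(t)$.

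It then remains to take $\sup_{t\in[0,T]}$ and let $h\to 0$. The term $\sup_t W_2(\rho_1(t),\rho_{1,h}(t))$ vanishes directly by Proposition \ref{prop:weak convergence}. For $i\geqslant 2$ lies the only genuine (though still routine) point: $\tilde\rho_{i,h}$ is the \emph{shifted} interpolant, equal to $\rho_{i,h}(hk)$ on $(hk,h(k+1)]$, so I split
$$ W_2(\rho_i(t),\tilde\rho_{i,h}(t)) \leqslant W_2(\rho_i(t),\rho_i(hk)) + W_2(\rho_i(hk),\rho_{i,h}(hk)), $$
bound the second term by $\sup_{s\in[0,T]} W_2(\rho_i(s),\rho_{i,h}(s))\to 0$ (Proposition \ref{prop:weak convergence}) and the first by $C|t-hk|^{1/2}\leqslant C h^{1/2}\to 0$, using $\rho_i\in\mathcal{C}^{1/2}([0,T],\Paa(\Omega))$. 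Hence $\sup_{t\in[0,T]} W_1(\overline{\lambda}_1(t),\overline{\lambda}_{1,h}(t))\to 0$. (A measurable-in-$t$ choice of the maps $S_{i,h}(t)$ is available by a standard measurable selection argument, but is not needed for the pointwise-in-$t$ estimate asserted here.)
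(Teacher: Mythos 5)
Your proof is correct and is essentially the paper's argument: the paper builds $\overline{\lambda}_{1,h}(t)$ by gluing $\overline{\lambda}_1(t)$ with the $W_2$-optimal plans between the limit marginals and the discrete marginals via disintegration, which reduces exactly to your pushforward under the product of Brenier maps since each $\rho_i(t)$ is absolutely continuous, and the resulting bound of $W_1(\overline{\lambda}_1(t),\overline{\lambda}_{1,h}(t))$ by the sum of the coordinate $W_2$-distances is the same. If anything you are slightly more careful than the paper on the shifted interpolants $\tilde{\rho}_{i,h}$, for which Proposition \ref{prop:weak convergence} alone does not literally apply and the extra $Ch^{1/2}$ term coming from the $1/2$-H\"older continuity of $t\mapsto\rho_i(t)$ is indeed needed.
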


\begin{proof}
The proof is an adaptation of the one from \cite[Lemma 6.2]{BC1}. 
Let $\gamma_1(t) \in \Pi(\rho_1(t),\rho_{1,h}(t))$ be the optimal transport plan for $W_2$ and, for $i>1$, let $\tilde{\gamma}_i(t) \in \Pi(\rho_i(t),\tilde{\rho}_{i,h}(t))$ be the optimal transport plan for $W_2$.
 Let us disintegrate $\gamma_1(t)$ and $\tilde{\gamma}_i(t)$ as $\gamma_1(t)=\rho_1(t)\otimes\gamma_1^x(t)$ and $\tilde{\gamma}_i(t)=\tilde{\rho}_i(t)\otimes\tilde{\gamma}_i^x(t)$. 
 Now define $\overline{\lambda}_{1,h}(t)$ by
$$ \overline{\lambda}_{1,h}(t)= \int_{\Omega^l}\gamma_1^{x_1}(t)\otimes\tilde{\gamma}_2^{x_2}(t) \otimes \dots \otimes\tilde{\gamma}_l^{x_l}(t) \, d \overline{\lambda}_1(t,x_1, \dots, x_l).$$
By construction, $ \overline{\lambda}_{1,h}(t) \in \Pi(\rho_{1,h}(t),\tilde{\grho}_{1,h}(t))$. 
Then we introduce $\pi$ a transport plan between $\overline{\lambda}_1(t)$ and $\overline{\lambda}_{1,h}(t)$ defined, for all $\varphi \in \mathcal{C}(\Omega^{2l})$, by
$$ \int_{\Omega^{2l}} \varphi(\x,\y) \,d\pi(\x, \y)=\int_{\Omega^l}\left( \int_{\Omega^l}\varphi(\x,\y) \,\gamma_1^{x_1}(t,dy_1)\tilde{\gamma}_2^{x_2}(t,dy_2) \otimes \dots \otimes \tilde{\gamma}_l^{x_l}(t,dy_l)\right) \,\overline{\lambda}_1(t,d\x),$$
where $\x =(x_1, \dots , x_l), \y = (y_1, \dots, y_l)$ are in $\Omega^l$.
Since $\pi \in \Pi(\overline{\lambda}_1(t),\overline{\lambda}_{1,h}(t))$ we have
\begin{eqnarray*}
W_1(\overline{\lambda}_1(t),\overline{\lambda}_{1,h}(t)) & \leqslant &\int_{\Omega^{2l}} (|x_1-y_1|+\sum_{i=2}^l|x_i-y_i|) \,d\pi(\x,\y)\\
&\leqslant & \int_{\Omega^2} |x_1-y_1|\gamma_1^{x_1}(t,dy_1)\rho_1(t,dx_1) +\sum_{i=2}^l\int_{\Omega^2} |x_i-y_i|\tilde{\gamma}_i^{x_i}(t,dy_i)\tilde{\rho}_i(t,dx_i)\\
&\leqslant & \int_{\Omega^2} |x-y|\gamma_1(t,dx,dy)+\sum_{i=2}^l\int_{\Omega^2} |x-y|\tilde{\gamma}_i(t,dx,dy)\\
&\leqslant & W_2^2(\rho_{1,h}(t),\rho_{1}(t))+\sum_{i=2}^l W_2^2(\rho_{i,h}(t),\tilde{\rho}_i(t)) .
\end{eqnarray*}
Then Proposition \ref{prop:weak convergence} concludes the proof.
\end{proof}
~\\
From the previous Lemma, we show that $\lambda_1(t)$ is optimal for $\W_1\left(\rho_1(t), \dots , \rho_l(t) \right)$ $t$-a.e in $[0,T]$.

\begin{prop}
\label{prop:optimal transport plan}
For almost every $t \in [0,T]$, $\lambda_1(t)$ is optimal for $\W_1\left(\rho_1(t), \dots , \rho_l(t) \right)$.
\end{prop}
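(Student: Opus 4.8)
The plan is to upgrade the convergence of Proposition \ref{prop: convergence plan transport} to optimality by comparing, at the level of time-integrated costs, the discrete plans $\lambda_{1,h}(t)$ with the competitors produced in Lemma \ref{part5-approximation transport plan}, and then to use a nonnegativity argument for the pointwise conclusion. First I would fix a measurable selection $t \mapsto \overline{\lambda}_1(t)$ of optimal plans for $\W_1(\rho_1(t),\dots,\rho_l(t))$; such a selection exists because $\W_1$ is lower semicontinuous for the narrow convergence of its marginals and $t \mapsto (\rho_1(t),\dots,\rho_l(t))$ is continuous, so a classical measurable selection theorem applies. Lemma \ref{part5-approximation transport plan} then provides plans $\overline{\lambda}_{1,h}(t) \in \Pi(\rho_{1,h}(t),\tilde{\grho}_{1,h}(t))$ with $\sup_{t \in [0,T]} W_1(\overline{\lambda}_1(t),\overline{\lambda}_{1,h}(t)) \to 0$. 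Since $\lambda_{1,h}(t) = \lambda_{1,h}^{k+1}$ is by construction optimal for $\W_1$ with exactly the marginals $(\rho_{1,h}(t),\tilde{\grho}_{1,h}(t))$, the plan $\overline{\lambda}_{1,h}(t)$ is an admissible competitor, so that
$$\int_{\Omega^l} c_1 \, d\lambda_{1,h}(t) \leqslant \int_{\Omega^l} c_1 \, d\overline{\lambda}_{1,h}(t), \qquad t \in (0,T],$$
and, integrating over $[0,T]$, $\displaystyle\int_0^T \int_{\Omega^l} c_1 \, d\lambda_{1,h}(t)\,dt \leqslant \int_0^T \int_{\Omega^l} c_1 \, d\overline{\lambda}_{1,h}(t)\,dt$.

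Next I would pass to the limit $h \to 0$ in this inequality. The function $(t,x_1,\dots,x_l) \mapsto c_1(x_1,\dots,x_l)$ is bounded and continuous on the compact set $[0,T] \times \overline{\Omega}^l$, so the narrow convergence $\lambda_{1,h}^T \to \lambda_1^T$ of Proposition \ref{prop: convergence plan transport}, with $\lambda_1^T = T^{-1}\lambda_1(t)\otimes dt$, yields $\int_0^T \int_{\Omega^l} c_1 \, d\lambda_{1,h}(t)\,dt \to \int_0^T \int_{\Omega^l} c_1 \, d\lambda_1(t)\,dt$. For the right-hand side, the uniform-in-$t$ convergence $W_1(\overline{\lambda}_{1,h}(t),\overline{\lambda}_1(t)) \to 0$ implies narrow convergence $\overline{\lambda}_{1,h}(t) \to \overline{\lambda}_1(t)$ for each $t$, whence $\int_{\Omega^l} c_1 \, d\overline{\lambda}_{1,h}(t) \to \int_{\Omega^l} c_1 \, d\overline{\lambda}_1(t) = \W_1(\rho_1(t),\dots,\rho_l(t))$; these integrands being bounded by $\|c_1\|_{L^\infty(\overline{\Omega}^l)}$ and $[0,T]$ having finite measure, dominated convergence gives $\int_0^T \int_{\Omega^l} c_1 \, d\overline{\lambda}_{1,h}(t)\,dt \to \int_0^T \W_1(\rho_1(t),\dots,\rho_l(t))\,dt$. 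Therefore
$$\int_0^T \int_{\Omega^l} c_1 \, d\lambda_1(t)\,dt \leqslant \int_0^T \W_1(\rho_1(t),\dots,\rho_l(t))\,dt.$$

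To conclude, recall from Proposition \ref{prop: convergence plan transport} that $\lambda_1(t) \in \Pi(\rho_1(t),\dots,\rho_l(t))$ for a.e. $t$, so the function $g(t) := \int_{\Omega^l} c_1 \, d\lambda_1(t) - \W_1(\rho_1(t),\dots,\rho_l(t))$ is nonnegative for a.e. $t \in [0,T]$; it is also measurable, since $t \mapsto \int_{\Omega^l} c_1 \, d\lambda_1(t)$ is measurable by the disintegration $\lambda_1^T = T^{-1}\lambda_1(t)\otimes dt$, and $t \mapsto \W_1(\rho_1(t),\dots,\rho_l(t))$ is lower semicontinuous because $t \mapsto (\rho_1(t),\dots,\rho_l(t))$ is continuous. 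The last displayed inequality reads $\int_0^T g(t)\,dt \leqslant 0$, which forces $g = 0$ a.e., i.e. $\lambda_1(t)$ is optimal for $\W_1(\rho_1(t),\dots,\rho_l(t))$ for a.e. $t \in [0,T]$; since $T$ is arbitrary, the proposition follows. The step I expect to be most delicate is the measurability bookkeeping — guaranteeing that the optimal plans $\overline{\lambda}_1(t)$ can be chosen to depend measurably on $t$, so that Lemma \ref{part5-approximation transport plan} applies with a legitimate family and all the time integrals make sense — which is precisely where one invokes a measurable selection theorem together with the lower semicontinuity of $\W_1$.
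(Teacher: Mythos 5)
Your proof is correct and follows essentially the same route as the paper: compare the discrete plans against the approximating competitors from Lemma \ref{part5-approximation transport plan}, pass to the limit in the time-integrated cost inequality using the narrow convergence of Proposition \ref{prop: convergence plan transport}, and localize in time. The only differences are cosmetic — you integrate against $\varphi\equiv 1$ and exploit the sign of $g(t)$ where the paper tests against arbitrary nonnegative $\varphi\in\mathcal{C}^\infty([0,T])$, and you make explicit the measurable-selection point that the paper leaves implicit.
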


\begin{proof}
Let $\overline{\lambda}_1(t)$ be an optimal transport plan for $\W_1\left(\rho_1(t), \dots , \rho_l(t) \right)$.
 First, define $\overline{\lambda}_{1,h}(t)$ as in Lemma \ref{part5-approximation transport plan}. 
 Since, by definition, $\lambda_{1,h}$ is optimal for $\W_1(\rho_{1,h},\tilde{\grho}_{1,h})$ and $\overline{\lambda}_{1,h}(t) \in \Pi(\rho_{1,h},\tilde{\grho}_{1,h})$, we have
$$ \int_{\Omega^{l}} c_1(\x) \overline{\lambda}_{1,h}(t,d\x) \geqslant \int_{\Omega^{l}} c_1(\x) \lambda_{1,h}(t,d\x) ,$$
where $\x=(x_1, \dots , x_l)$.
So, for all nonnegative function $\varphi \in \mathcal{C}^\infty([0,T])$, we get
$$\int_0^T \int_{ \Omega^{l}} c_1(\x) \overline{\lambda}_{1,h}(t,d\x) \varphi(t) \,dt\geqslant \int_0^T\int_{\Omega^{l}} c_1(\x) \lambda_{1,h}(t,d\x) \varphi(t) \,dt.$$
Since $\Omega$ is bounded and according to Lemma \ref{part5-approximation transport plan}, 
$$\int_0^T \int_{\Omega^{l}} c_1(\x) \overline{\lambda}_{1,h}(t,d\x) \varphi(t) \,dt\rightarrow\int_0^T \int_{ \Omega^{l}} c_1(\x) \overline{\lambda}_{1}(t,d\x) \varphi(t) \,dt,$$
as $h\rightarrow 0$. 
In addition, since $\lambda_{1,h}$ narrowly converges to $\lambda_{1}$ in $\Pa([0,T] \times  \Omega^{l})$, we have
$$\int_0^T\int_{\Omega^{l}} c_1(\x) \lambda_{1,h}(t,d\x) \varphi(t) \,dt\rightarrow\int_0^T\int_{ \Omega^{l}} c_1(\x) \lambda_{1}(t,d\x) \varphi(t) \,dt.$$
And then 
$$\int_0^T \int_{\Omega^{l}} c_1(\x) \overline{\lambda}_{1}(t,d\x) \varphi(t) \,dt \geqslant \int_0^T\int_{\Omega^{l}} c_1(\x) \lambda_{1}(t,d\x) \varphi(t) \,dt.$$
The inequality holds for all nonnegative function $\varphi \in \mathcal{C}^\infty([0,T])$, we thus obtain, for almost every $t \in [0,T]$,
$$ \W_{1}\left(\rho_1(t), \dots , \rho_l(t) \right) \geqslant \int_{\Omega^{l}}c_1(\x) \lambda_1(t,d\x),$$
and the proof is concluded.
\end{proof}

\subsubsection{Proof of Theorem \ref{part5-existence}}

First, we show that $(\rho_{1,h}, \dots , \rho_{l,h})$ is solution of a discrete approximation of system \eqref{part5-system}.
\begin{prop}
\label{part5-discreet system}
Let $h>0$, for all $T>0$, let $N$ such that $Nh=T$ and for all $\phi \in \mathcal{C}^\infty_c ([0,T)\times \Omega)$, then
\begin{eqnarray*}
\int_0^T \int_{\Omega} \rho_{i,h}(t,x)  \partial_t \phi(t,x) \,dxdt&=&h\sum_{k=0}^{N-1}\int_{\Omega} \nabla P_i(\rho_{i,h}^{k+1}(x)) \cdot \nabla \phi (t_{k},x)\,dx\\
&+&h\sum_{k=0}^{N-1}\int_{\Omega^l} \nabla_{x_i} c_i(\x) \cdot  \nabla \phi(t_{k},x_i) \, d\lambda_{i,h}^{k+1}(\x)\\
&+&\sum_{k=0}^{N-1}\int_{\Omega \times \Omega} \mathcal{R}[\phi(t_{k},\cdot)](x,y) d\gamma_{i,h}^{k+1} (x,y)\\
&-& \int_{\Omega} \rho_{i,0}(x) \phi(0,x) \, dx,
\end{eqnarray*}
with, for all $\phi \in \mathcal{C}^\infty_c([0,T) \times \Rn)$,
$$ |\mathcal{R}[\phi](x,y)| \leqslant \frac{1}{2} \|D^2 \phi \|_{L^\infty ([0,T) \times \Omega)} |x- y|^2,$$
$\gamma_{i,h}^{k+1}$ is an optimal transport plan in $\Gamma (\rho_{i,h}^{k},\rho_{i,h}^{k+1})$.

\end{prop}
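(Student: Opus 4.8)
The plan is to derive the formula from the Euler--Lagrange relation \eqref{part5-equality a.e 1} of Proposition~\ref{part5-equality a.e}, paired with the spatial gradient of the test function, together with a discrete integration by parts in time. I begin with a one-step identity. Fix $\phi\in\mathcal{C}^\infty_c([0,T)\times\Omega)$ and set $t_k:=hk$. For each $k$, let $\gamma_{i,h}^{k+1}\in\Gamma(\rho_{i,h}^k,\rho_{i,h}^{k+1})$ be the $W_2$-optimal plan; since $\rho_{i,h}^{k+1}\in\Paa(\Omega)$ it is carried by the graph of the optimal map $Id-\nabla\varphi_{i,h}^{k+1}$ pushing $\rho_{i,h}^{k+1}$ onto $\rho_{i,h}^k$, with $\rho_{i,h}^{k+1}$ sitting in the second slot, so that $x-y=-\nabla\varphi_{i,h}^{k+1}(y)$ for $\gamma_{i,h}^{k+1}$-a.e.\ $(x,y)$. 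Taylor's formula then gives
$$\phi(t_k,x)-\phi(t_k,y)=-\nabla\phi(t_k,y)\cdot\nabla\varphi_{i,h}^{k+1}(y)+\mathcal{R}[\phi(t_k,\cdot)](x,y),\qquad\gamma_{i,h}^{k+1}\text{-a.e.},$$
where the remainder obeys $|\mathcal{R}[\phi(t_k,\cdot)](x,y)|\leqslant\frac12\|D^2\phi\|_{L^\infty([0,T)\times\Omega)}|x-y|^2$. Integrating against $\gamma_{i,h}^{k+1}$, whose marginals are $\rho_{i,h}^k$ and $\rho_{i,h}^{k+1}$, the left-hand side becomes $\int_\Omega\phi(t_k)\,d\rho_{i,h}^k-\int_\Omega\phi(t_k)\,d\rho_{i,h}^{k+1}$ and the first term on the right becomes $-\int_\Omega\nabla\phi(t_k)\cdot\nabla\varphi_{i,h}^{k+1}\,\rho_{i,h}^{k+1}$. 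Since $\phi$ has compact spatial support, $\nabla\phi(t_k,\cdot)$ is bounded, so I may multiply the a.e.\ identity \eqref{part5-equality a.e 1}, rewritten as $-\nabla\varphi_{i,h}^{k+1}\rho_{i,h}^{k+1}=h\nabla u_{i,h}^{k+1}\rho_{i,h}^{k+1}+h\nabla P_i(\rho_{i,h}^{k+1})$, by $\nabla\phi(t_k,\cdot)$ and integrate; then \eqref{eq:caract pot Kantorovich avec le cout }, together with the fact that $\lambda_{i,h}^{k+1}$ has $i$-th marginal $\rho_{i,h}^{k+1}$, turns $h\int_\Omega\nabla\phi(t_k)\cdot\nabla u_{i,h}^{k+1}\rho_{i,h}^{k+1}$ into $h\int_{\Omega^l}\nabla_{x_i}c_i(\x)\cdot\nabla\phi(t_k,x_i)\,d\lambda_{i,h}^{k+1}(\x)$. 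Collecting these yields
$$\int_\Omega\phi(t_k)\,d\rho_{i,h}^k-\int_\Omega\phi(t_k)\,d\rho_{i,h}^{k+1}=h\int_\Omega\nabla P_i(\rho_{i,h}^{k+1})\cdot\nabla\phi(t_k)+h\int_{\Omega^l}\nabla_{x_i}c_i(\x)\cdot\nabla\phi(t_k,x_i)\,d\lambda_{i,h}^{k+1}(\x)+\int_{\Omega\times\Omega}\mathcal{R}[\phi(t_k,\cdot)]\,d\gamma_{i,h}^{k+1}.$$

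Next I would integrate by parts in time. Since $\rho_{i,h}(t)\equiv\rho_{i,h}^{k+1}$ on $(hk,h(k+1)]$, one has $\int_0^T\int_\Omega\rho_{i,h}\,\partial_t\phi\,dx\,dt=\sum_{k=0}^{N-1}\int_\Omega\rho_{i,h}^{k+1}\big(\phi(t_{k+1},\cdot)-\phi(t_k,\cdot)\big)\,dx$; reindexing the sum $\sum_k\int\rho_{i,h}^{k+1}\phi(t_{k+1},\cdot)$ and using $\phi(T,\cdot)\equiv0$ (compact support in $[0,T)$) together with $\rho_{i,h}^0=\rho_{i,0}$, an Abel-summation computation rewrites the right-hand side as $\sum_{k=0}^{N-1}\big(\int_\Omega\phi(t_k)\,d\rho_{i,h}^k-\int_\Omega\phi(t_k)\,d\rho_{i,h}^{k+1}\big)-\int_\Omega\phi(0,x)\rho_{i,0}(x)\,dx$. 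Substituting the one-step identity into this sum produces exactly the asserted formula, the bound on $\mathcal{R}$ having already been recorded.

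The only delicate point is the bookkeeping. In the first step one must keep track of which marginal of $\gamma_{i,h}^{k+1}$ carries $\rho_{i,h}^{k+1}$, so that the sign of the displacement $-\nabla\varphi_{i,h}^{k+1}$ matches the sign in \eqref{part5-equality a.e 1}, any residual sign being absorbed in the precise definition of $\mathcal{R}$ (the statement constrains $\mathcal{R}$ only through its quadratic bound). In the second step one must handle the shift built into the piecewise-constant interpolation $\rho_{i,h}$ and the boundary contributions at $t=0$ and $t=T$ correctly. Beyond this there is no analytic obstacle: once Proposition~\ref{part5-equality a.e} is in hand --- it supplies both $P_i(\rho_{i,h}^{k+1})\in W^{1,1}(\Omega)$, which makes the diffusion term meaningful, and the pointwise identity \eqref{part5-equality a.e 1} --- the argument reduces to Taylor's theorem and the rearrangement of finite sums.
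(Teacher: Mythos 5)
Your argument is correct and is precisely the standard computation the paper invokes in its one-line proof (``a consequence of \eqref{part5-equality a.e 1}, see \cite{A,L}''): Taylor-expand $\phi(t_k,\cdot)$ along the optimal plan $\gamma_{i,h}^{k+1}$, substitute the Euler--Lagrange identity \eqref{part5-equality a.e 1} together with \eqref{eq:caract pot Kantorovich avec le cout } to convert the displacement term into the diffusion and multi-marginal terms, and conclude by Abel summation in time using $\phi(T,\cdot)=0$. The sign and marginal-ordering bookkeeping you flag is handled consistently, so the proof matches the paper's intended route with the details filled in.
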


\begin{proof}
This is a consequence of \eqref{part5-equality a.e 1} (see \cite{A,L}).
\end{proof}
Now, we have to take the limit in the system of Proposition \ref{part5-discreet system}. 
The linear term (with time derivative) and the diffusion term converge to the desired result thanks to Proposition \ref{prop:strong convergence}.
 The remainder term goes to $0$ as $h$ goes to $0$ because of \eqref{part5-estimation distance}. 
 So it remains to check the convergence of multi-marginal interaction terms. 
 By Proposition \ref{prop: convergence plan transport}, $\lambda_{i,h}$ converges to $\lambda_i$ in $\Pa([0,T] \times \Omega^l)$ and then,
$$ h\sum_{k=0}^{N-1}\int_{\Omega^l} \nabla_{x_i} c_{i}(\x) \cdot  \nabla \phi(t_{k},x_i) \,d\lambda_{i,h}^{k+1}(\x) \rightarrow \int_0^T \int_{\Omega^l} \nabla_{x_i} c_{i}(\x) \cdot \nabla \phi(t,x_i) \,d\lambda_{i}(t,\x)dt.$$
and, by Proposition \ref{prop:optimal transport plan}, $\lambda_{i}(t)$ is an optimal transport plan for $\W_i(\rho_1,\dots ,\rho_l)$.

\section{Uniqueness in dimension one}
\label{part5-paper uniqueness}

In this section, $\Omega$ is a compact convex subset in $\R$. 
We give an uniqueness result based on a displacement convexity argument and some examples of functionals satisfying this condition. 
Although Theorem \ref{part5-uniqueness} holds in dimension higher than one, we retrict ourselves to the dimension one because as far as we know, there is no example of multi-marginal functional geodesicaly convex in higher dimension.

\subsection{Displacement convexity in product Wasserstein space}

For the purpose of this paper, it is enough to restrict ourselves to absolutely continuous probability measures.
 Given $\rho_0$ and $\rho_1$ in $\mathcal{P}^{ac}(\Omega)$, there exists a unique optimal transport map $T$ between $\rho_0$ and $\rho_1$ i.e $T_{\#} \rho_0 = \rho_1$ and 
$$ W_2^2(\rho_0 ,\rho_1) = \int_\Omega |x -T(x) |^2 \rho_0(x) \,dx.$$
In addition, $T \, : \, \Omega \rightarrow \Omega$ is a nondecreasing map.\\ 
The Wasserstein geodesic between $\rho_0$ and $\rho_1$ is the curve $t \in [0,1] \mapsto \rho_t$ given by the McCann's interpolation
$$ \rho_t := {T^t}_{\#} \rho_0,$$
where $T^t = (1-t) Id + tT$ is the optimal transport map between $\rho_0$ and $\rho_t$, and $ \rho_t$ is a constant speed geodesic:
$$ W_2(\rho_t,\rho_s) = |t-s| W_2(\rho_0,\rho_1).$$
Now we recall the definition of geodesically convex functional in Wasserstein product space.

\begin{de}
Let $\lambda \in \R$. A functional $\W \, : \, \Pa(\Omega)^l \rightarrow (-\infty,+\infty]$ is said $\lambda$-geodesically convex in $\Pa(\Omega)^l$ if for every $i \in [\![ 1,l]\!]$ and for every couple $(\mu_i^0,\mu_i^1) \in \Pa(\Omega)^2$ 
$$ \W(\mu_1^t,\dots, \mu_l^t) \leqslant (1-t) \W(\mu_1^0,\dots, \mu_l^0)+t\W(\mu_1^1,\dots, \mu_l^1) -\frac{\lambda}{2}t(1-t)\boldsymbol{W_2^2}((\mu_1^0,\dots, \mu_l^0),(\mu_1^1,\dots, \mu_l^1)),$$
where $\mu_i^t$ is a constant speed geodesic between $\mu_i^0$ and $\mu_i^1$ and $\boldsymbol{W_2}$ is the product distance on $\Pa(\Omega)^l$.
\end{de}

\smallskip 

Note that if $F \, :\, [0,+\infty) \rightarrow \R$ satisfies McCann's condition i.e. the map 
\begin{equation}
\label{part5-McCann condition}
r \in (0,+\infty) \mapsto r^{n}F(r^{-n}) \text{ is convex nonincreasing},
\end{equation}
then it is well-known that  
$$ \F(\rho):= \left\{ \begin{array}{ll}
\int_\Omega F(\rho) & \text{ if }F(\rho) \in L^1(\Omega),\\
+\infty & \text{ otherwise},
\end{array}\right.$$
 is geodesically convex ($\lambda=0$), see \cite{MC}.

\bigskip
In the following we give a class of multi-marginal functionals geodesically convex. First, we provide a characterization of the co-monotone transport plan as in \cite[Lemma 2.8]{S}.

\begin{lem}
\label{lem:caract optimal transport plan}
For $l \geqslant 2$, let $\gamma$ be a transport plan having $\rho_1, \dots, \rho_l$ as marginals.
 If $\gamma$ satisfies the property
\begin{eqnarray}
\label{eq:monotony transport plan}
(x_1,\dots , x_l),(y_1, \dots , y_l) \in \supp \gamma \Rightarrow \left[ x_1 < y_1 \Rightarrow \forall i, \, x_i \leqslant y_i \right],
\end{eqnarray}
then $\gamma = \gamma_{mon}:= (G_1^{[-1]}, \dots, G_l^{[-1]})_{\#} \mathcal{L}^1_{|[0,1]}$, where $G_i^{[-1]}$ is the pseudo-inverse of the cumulative distribution function of $\rho_i$, $G_i(a)=\rho_i((-\infty, a])$.
\end{lem}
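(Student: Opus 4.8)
The plan is to reduce the $l$-marginal statement to $l-1$ two-marginal statements and then invoke the classical one-dimensional optimal transport theory, exactly in the spirit of \cite[Lemma 2.8]{S}. Fix $i \in \{2,\dots,l\}$ and let $\gamma^{1,i}$ denote the marginal of $\gamma$ on the first and $i$-th coordinates, so that $\gamma^{1,i} \in \Pi(\rho_1,\rho_i)$. The monotonicity assumption \eqref{eq:monotony transport plan} transfers immediately: if $(x_1,x_i),(y_1,y_i) \in \supp \gamma^{1,i}$ with $x_1 < y_1$, then $x_i \leqslant y_i$, i.e.\ $\supp \gamma^{1,i}$ is a monotone subset of $\R^2$.

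Next I would use that, since we work with $\rho_1 \in \mathcal{P}^{ac}(\Omega)$ (in particular $\rho_1$ is atomless), a transport plan in $\Pi(\rho_1,\rho_i)$ with monotone support is induced by a nondecreasing map. Indeed, for each $x_1$ the fiber $\{x_i : (x_1,x_i)\in \supp\gamma^{1,i}\}$ is an interval, and by monotonicity these intervals are essentially disjoint as $x_1$ varies; hence only countably many of them are nondegenerate, and the corresponding set of base points is $\rho_1$-negligible. Thus there is a nondecreasing $T_i$ with $\gamma^{1,i} = (\id \times T_i)_{\#}\rho_1$ and $T_i{}_{\#}\rho_1 = \rho_i$, and the (unique $\rho_1$-a.e.) monotone map from $\rho_1$ to $\rho_i$ is $G_i^{[-1]} \circ G_1$. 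Since $x_i = T_i(x_1)$ holds $\gamma$-a.e.\ for each $i=2,\dots,l$, intersecting these finitely many full-measure sets and using $\pi^1_{\#}\gamma = \rho_1$ gives $\gamma = (\id, T_2,\dots,T_l)_{\#}\rho_1$. Finally, pushing forward through $G_1^{[-1]}$ (recall $\rho_1 = G_1^{[-1]}{}_{\#}\mathcal{L}^1_{|[0,1]}$) and using the identity $G_1 \circ G_1^{[-1]} = \id$ $\mathcal{L}^1$-a.e.\ on $(0,1)$ (a consequence of $\rho_1$ being atomless), one gets $T_i \circ G_1^{[-1]} = G_i^{[-1]} \circ G_1 \circ G_1^{[-1]} = G_i^{[-1]}$ a.e., whence $\gamma = (G_1^{[-1]},\dots,G_l^{[-1]})_{\#}\mathcal{L}^1_{|[0,1]} = \gamma_{mon}$.

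The main obstacle is the middle step: showing rigorously that monotone support forces $\gamma^{1,i}$ to be a graph rather than allowing genuine branching. This is precisely where the one-dimensional structure and the absolute continuity of $\rho_1$ enter — the "bad" base points (those with a nondegenerate fiber) form a countable, hence $\rho_1$-null, set. Everything else is bookkeeping with projections, disintegrations, and the elementary properties of cumulative distribution functions and their pseudo-inverses, and can be taken from the standard references \cite{S,V1,AGS}.
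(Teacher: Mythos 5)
Your argument is correct in the setting where it is actually applied, but it takes a genuinely different route from the paper's. The paper never produces a map: it computes the joint cumulative distribution directly, showing first that $\gamma_{mon}\left((-\infty,a_1]\times\dots\times(-\infty,a_l]\right)=\min_i G_i(a_i)$ from the definition of the pseudo-inverses, and then that any $\gamma$ satisfying \eqref{eq:monotony transport plan} has the same joint CDF (via the sets $A_i$ and the observation that monotonicity forbids $\gamma(A_i)>0$ and $\gamma(A_j)>0$ simultaneously for $i\neq j$); equality of joint CDFs then forces $\gamma=\gamma_{mon}$. You instead project onto the coordinate pairs $(1,i)$, invoke the classical one-dimensional fact that a two-marginal plan with monotone support and atomless first marginal is induced by the monotone rearrangement $G_i^{[-1]}\circ G_1$, and glue the graphs. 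Your route buys a stronger conclusion in the a.c.\ case --- the explicit Monge representation $\gamma=(\id,T_2,\dots,T_l)_{\#}\rho_1$ --- at the cost of generality: the ``countably many nondegenerate fibers'' step genuinely uses that $\rho_1$ is atomless, and if $\rho_1$ had an atom the plan could split mass there and fail to be a graph, while the conclusion $\gamma=\gamma_{mon}$ would still hold. The lemma as stated imposes no regularity on the marginals, so strictly speaking you prove a special case; since the section explicitly restricts to $\Paa(\Omega)$ and the lemma is only ever applied to absolutely continuous marginals in Proposition \ref{prop:geo conv multimarge}, this is harmless here, but it is worth flagging. One small overstatement: condition \eqref{eq:monotony transport plan} does not force the fiber of $\supp\gamma^{1,i}$ over a fixed $x_1$ to be an interval (it only compares points with distinct first coordinates); what you actually need, and what does follow, is that the open convex hulls of distinct fibers are pairwise disjoint, which still yields countability of the bad set.
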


\begin{proof}
This lemma is an extension of \cite[Lemma 2.8]{S} (where the case $l=2$ is studied) and the proof is similar.
 First, for all $a_1, \dots , a_l \in \R$, we know that $\gamma_{mon}((-\infty, a_1] \times \dots \times (-\infty , a_l]) = \min_i G_i(a_i)$. 
 Indeed, by definition of $\gamma_{mon} \in \Pi(\rho_1, \dots, \rho_l)$,
\begin{eqnarray*}
\gamma_{mon}((-\infty, a_1] \times \dots \times (-\infty , a_l]) &=& \left|\{ x \in [0,1] \, : \, \forall i, \, G_i^{[-1]}(x) \leqslant a_i \} \right|\\
&=& \left|\{ x \in [0,1] \, : \, \forall i, \, x \leqslant G_i(a_i) \} \right|\\
&=& \min_i G_i(a_i).
\end{eqnarray*}
Since the knowledge of $\gamma((-\infty, a_1] \times \dots \times (-\infty , a_l])$, for all $a_1, \dots , a_l \in \R$ is enough to characterize $\gamma$, we just need to show that 
$$ \gamma((-\infty, a_1] \times \dots \times (-\infty , a_l])=\min_i G_i(a_i),  $$
to conclude the proof. Define for all $i$, the set $A_i= \Pi_{j=1}^{i-1}[a_j, +\infty) \times (-\infty , a_i] \times \Pi_{j=i+1}^l [a_j, +\infty)$. 
Since $\gamma$ satisfies \eqref{eq:monotony transport plan}, for all $i \neq j$ we cannot have both $\gamma (A_i) >0$ and $\gamma (A_j) >0$. 
Then,
\begin{eqnarray*}
\gamma((-\infty, a_1] \times \dots \times (-\infty , a_l]) & =& \min_i \gamma((-\infty, a_1] \times \dots \times (-\infty , a_l] \cup A_i)\\
&=& \min_i \gamma( \R \times \dots \times \R \times (-\infty, a_i] \times \R \dots \times \R)\\
&=& \min_i \rho_i ((-\infty, a_i])\\
&=& \min_i G_i(a_i).
\end{eqnarray*}
\end{proof}

~\\
This lemma allows us to study the geodesic convexity of multi-marginal functionals for a large class of costs.

\begin{prop}
\label{prop:geo conv multimarge}
Let $c \, : \, \Omega^l \rightarrow \R$ be a $\mathcal{C}^2$ convex function satisfying $\partial_{i,j} c \leqslant 0$ for all $i\neq j$. 
The functional $\W_c \, : \, \Paa(\Omega)^l \rightarrow \R$ defined by
$$ \W_c(\rho_1, \dots , \rho_l) := \inf\left\{ \int_{\Omega^l} c(x_1, \dots , x_l) \, d\gamma (x_1, \dots , x_l) \, : \, \gamma \in \Pi(\rho_1 , \dots , \rho_l) \right\}, $$
is geodesically convex in $\Paa(\Omega)^l$.

\end{prop}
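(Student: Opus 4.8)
The plan is to reduce the geodesic convexity of $\W_c$ to a one-dimensional convexity statement about the monotone rearrangement, exploiting Lemma \ref{lem:caract optimal transport plan}. The crucial first observation is that, in dimension one, the unique optimal plan realizing $\W_c(\rho_1,\dots,\rho_l)$ is the co-monotone plan $\gamma_{\mix}=(G_1^{[-1]},\dots,G_l^{[-1]})_\#\Leb^1_{|[0,1]}$. Indeed, given any optimizer $\gamma$, the hypothesis $\partial_{i,j}c\leqslant 0$ for $i\neq j$ together with $c$ being $\C^2$ implies that $c$ is (Monge) supermodular in each pair of variables separately, hence satisfies the monotonicity property \eqref{eq:monotony transport plan} by a standard cyclical-monotonicity / ``cut-and-paste'' argument: if $(x_1,\dots,x_l)$ and $(y_1,\dots,y_l)$ both lie in $\supp\gamma$ with $x_1<y_1$ but $x_i>y_i$ for some $i$, then swapping the $i$-th coordinates strictly decreases the cost by the sign condition on $\partial_{1,i}c$, contradicting optimality; Lemma \ref{lem:caract optimal transport plan} then forces $\gamma=\gamma_{\mix}$.

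Once optimality of the co-monotone plan is established, I would write, for $\rho=(\rho_1,\dots,\rho_l)\in\Paa(\Omega)^l$,
\begin{equation*}
\W_c(\rho_1,\dots,\rho_l)=\int_0^1 c\bigl(G_1^{[-1]}(s),\dots,G_l^{[-1]}(s)\bigr)\,ds.
\end{equation*}
Now take two endpoints $\rho^0=(\rho_1^0,\dots,\rho_l^0)$ and $\rho^1=(\rho_1^1,\dots,\rho_l^1)$ in $\Paa(\Omega)^l$ and let $t\mapsto\mu_i^t$ be the Wasserstein geodesic between $\rho_i^0$ and $\rho_i^1$. In dimension one the optimal map $T_i$ from $\rho_i^0$ to $\rho_i^1$ is nondecreasing, and the McCann interpolant $\mu_i^t=((1-t)\id+tT_i)_\#\rho_i^0$ has pseudo-inverse cumulative distribution function $(G_i^t)^{[-1]}=(1-t)(G_i^0)^{[-1]}+t(G_i^1)^{[-1]}$ — this is the elementary fact that the quantile function of a McCann interpolant is the affine interpolant of the quantile functions. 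Substituting into the formula above,
\begin{equation*}
\W_c(\mu_1^t,\dots,\mu_l^t)=\int_0^1 c\Bigl((1-t)(G_1^0)^{[-1]}(s)+t(G_1^1)^{[-1]}(s),\ \dots,\ (1-t)(G_l^0)^{[-1]}(s)+t(G_l^1)^{[-1]}(s)\Bigr)\,ds.
\end{equation*}
Pointwise in $s$, the integrand is the value of the convex function $c$ at the affine combination of the two vectors $\bigl((G_i^0)^{[-1]}(s)\bigr)_i$ and $\bigl((G_i^1)^{[-1]}(s)\bigr)_i$; convexity of $c$ gives the inequality $\leqslant(1-t)\,c(\dots)+t\,c(\dots)$ under the integral, and integrating in $s$ yields precisely $\W_c(\mu^t)\leqslant(1-t)\W_c(\rho^0)+t\W_c(\rho^1)$, i.e. $0$-geodesic convexity. (If one wanted a $\lambda$-version one would use strong convexity of $c$ and the identity $\int_0^1|(G_i^1)^{[-1]}(s)-(G_i^0)^{[-1]}(s)|^2\,ds=W_2^2(\rho_i^0,\rho_i^1)$, but the statement only claims $\lambda=0$.)

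I expect the main obstacle to be the first step: rigorously proving that $\partial_{i,j}c\leqslant 0$ forces the optimal plan to be co-monotone in the multi-marginal ($l\geqslant 3$) setting. For $l=2$ this is the classical Lorentz / rearrangement result quoted in \cite{S}, but with more than two marginals one must be careful that the pairwise sign conditions $\partial_{1,i}c\leqslant0$ are enough to activate property \eqref{eq:monotony transport plan} for \emph{all} coordinates simultaneously; the argument is to fix the ordering in the first coordinate (which one may always assume after relabeling, since $x_1<y_1$ or $x_1>y_1$ on $\supp\gamma$ off the diagonal) and then run the pairwise exchange argument for each $i\in\{2,\dots,l\}$ separately against coordinate $1$, noting that each exchange is cost-nonincreasing and strictly decreasing unless $x_i\leqslant y_i$. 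Care is also needed at points where some coordinates coincide (handled by the non-strict inequalities) and to ensure measurability of the rearranged plan; these are routine but must be stated. Everything after this reduction is the soft convexity-under-the-integral computation sketched above.
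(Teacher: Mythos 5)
Your overall strategy is the same as the paper's: in dimension one the optimizer of $\W_c$ is the co-monotone (quantile) plan, along McCann geodesics the quantile functions interpolate affinely, and convexity of $c$ under the integral then gives the result. Your quantile-function formulation and the paper's construction (pushing forward the optimal plan $\gamma_0=(\id,S_2,\dots,S_l)_{\#}\rho_1^0$ by the interpolation maps $T_i^t$ and identifying the endpoint with the optimal plan of $\rho^1$ via Lemma \ref{lem:caract optimal transport plan}) are two parametrizations of the same object, so the second half of your argument is fine and arguably more transparent.

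The genuine gap is in your first step, and it is exactly where you suspected. You propose to prove that any optimizer is co-monotone by a pairwise exchange: if $(x_1,\dots,x_l),(y_1,\dots,y_l)\in\supp\gamma$ with $x_1<y_1$ and $x_i>y_i$, swap the $i$-th coordinates and claim the cost strictly decreases because $\partial_{1,i}c\leqslant 0$. But the cost change of this swap is
\begin{equation*}
\Delta=\int_{y_i}^{x_i}\bigl[\partial_i c(y_1,\dots,s,\dots,y_l)-\partial_i c(x_1,\dots,s,\dots,x_l)\bigr]\,ds
=\int_{y_i}^{x_i}\sum_{j\neq i}\int_0^1\partial_{i,j}c(\cdot)\,(y_j-x_j)\,d\theta\,ds ,
\end{equation*}
so it is controlled by \emph{all} the cross-derivatives $\partial_{i,j}c$ for $j\neq i$, weighted by the signs of $y_j-x_j$. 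Only the $j=1$ term has a guaranteed sign; for $l\geqslant 3$ the coordinates $j\notin\{1,i\}$ may not yet be ordered, and running the exchanges "one at a time against coordinate $1$" does not fix this, since each exchange in coordinate $i$ still sees the unordered coordinates through $\partial_{i,j}c$. This is precisely why the monotone structure of the $1$D multi-marginal optimizer is a nontrivial theorem; the paper does not re-derive it but invokes Carlier's result (Theorem 4.1 of \cite{C_multi}) to obtain that the optimal plan is induced by nondecreasing maps $S_2,\dots,S_l$, and only then applies Lemma \ref{lem:caract optimal transport plan} to identify it with $\gamma_{mon}$. Replacing your exchange argument by that citation closes the gap and the rest of your proof goes through.
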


\begin{proof}
Given $(\rho_1^0, \dots , \rho_l^0)$ and $(\rho_1^1, \dots , \rho_l^1)$ in $\mathcal{P}^{ac}(\Omega)^l$, define the constant speed geodesic between $\rho_i^0$ and $\rho_i^1$, $\rho_i^t={T_i^t}_{\#} \rho_i^0$. 
Let $\gamma_0$ be an optimal transport plan for the multi-marginal problem $\W_c( \rho_1^0, \dots , \rho_l^0)$. 
By \cite[Theorem 4.1]{C_multi}, there exist $l-1$ nondecreasing maps $S_2, \dots , S_l$ such that $\gamma_0 = (Id, S_2, \dots , S_l)_{\#} \rho_1^0$. \\
Define the interpolation plan $\gamma_t$ by 
$$ \gamma_t= (T_1^t, \dots , T_l^t)_{\#} \gamma_0 = (T_1^t, T_2^t \circ S_2, \dots , T_l^t \circ S_l)_{\#} \rho_1^0.$$
Observe that $\gamma_1 = (T_1, T_2 \circ S_2, \dots , T_l \circ S_l)_{\#} \rho_1^0$ and since $T_1$ and for all $i\geqslant 2$, $ T_i \circ S_i$ are nondecreasing maps, $ \gamma_1$ satisfies \eqref{eq:monotony transport plan}. 
We want to show that $\gamma_1$ is an optimal transport plan for $\W_c(\rho_1^1, \dots , \rho_l^1)$. 
Theorem 4.1 from \cite{C_multi} says that the optimal transport plan $\gamma_{opt}$ of $\W_c(\rho_1^1, \dots , \rho_l^1)$ is of the form $\gamma_{opt} = (Id, \tilde{S}_2, \dots , \tilde{S}_l)_{\#} \rho_1^1$, where $\tilde{S}_i$ is nondecreasing. 
Then $\gamma_{opt}$ also satisfies \eqref{eq:monotony transport plan} and then by Lemma \ref{lem:caract optimal transport plan}, we conclude that $\gamma_1$ is an optimal transport plan for $\W_c(\rho_1^1, \dots , \rho_l^1)$.\\
By convexity of $c$, we have
\begin{eqnarray*}
\W_c(\rho_1^t, \dots , \rho_l^t) & \leqslant & \int_{\Omega^l} c(x_1, \dots , x_l) \,d\gamma_t\\
&\leqslant & \int_{\Omega^l} c(T_1^t(x_1), \dots , T_l^t(x_l)) \,d\gamma_0 \\
&\leqslant & (1-t)\int_{\Omega^l} c(x_1, \dots , x_l) \,d\gamma_0 + t\int_{\Omega^l} c(T_1(x_1), \dots , T_l(x_l)) \,d\gamma_0\\
&\leqslant & (1-t) \W_c(\rho_1^0, \dots , \rho_l^0) +t\W_c(\rho_1^1, \dots , \rho_l^1),
\end{eqnarray*}
which concludes the proof.

\end{proof}

\begin{rem} 
This result cannot be generalized in higher dimension. Indeed, in dimension $n>1$, it is well known that $W_2(\cdot, \sigma)$ is not $\lambda$-convex along geodesic on $\mathcal{P}(\Omega)$ (see example 9.1.5 from \cite{AGS}). 
\end{rem}

\subsection{Wasserstein contraction}

First, let us define the Fréchet subdifferential for $\W \, : \, \Paa(\Omega)^l \rightarrow (-\infty,+\infty]$ by extending the definition given in \cite{AGS}.

\begin{de}
Let $\W \, : \, \Paa(\Omega)^l \rightarrow (-\infty,+\infty]$ be a functional and let $\boldsymbol{\xi}=(\xi_1, \dots,\xi_l) \in L^2((\mu_1,\dots,\mu_l),\Omega)$, i.e
$$ \sum_{i=1}^l \int_\Omega |\xi_i|^2 \mu_i  <+\infty.$$
We say that $\boldsymbol{\xi}$ is in the Fréchet subdifferential $\partial \W(\mu_1,\dots,\mu_l)$ if
\begin{equation}
\label{part5-Frechet subdif}
\liminf_{\boldsymbol{\nu} \rightarrow \boldsymbol{\mu} } \frac{\W(\boldsymbol{\nu})-\W(\boldsymbol{\mu}) -\sum_{i=1}^l \int_\Omega \langle \xi_i(x) , T_{\mu_i}^{\nu_i}(x) -x \rangle \mu_i(x) \, dx }{ \boldsymbol{W_2}(\boldsymbol{\nu},\boldsymbol{\mu}) } \geqslant 0,
\end{equation}
where $\boldsymbol{\mu}:=(\mu_1, \dots , \mu_l)$ and $T_{\mu_i}^{\nu_i}$ is the optimal transport map between $\mu_i$ and $\nu_i$.
\end{de}

The next proposition characterizes the subdifferential of $\lambda$-geodesically convex functionals.

\begin{prop}
Let $\W \, : \, \Paa(\Omega)^l \rightarrow (-\infty,+\infty]$ be a $\lambda$-geodesically convex functional.
 Then a vector $\boldsymbol{\xi} \in L^2(\boldsymbol{\mu},\Omega)$ belongs to the Fréchet subdifferential of $\W$ at $\boldsymbol{\mu}$ if and only if
\begin{eqnarray}
\label{part5-caracterisation subdif 1}
\W(\boldsymbol{\nu})-\W(\boldsymbol{\mu}) \geqslant \sum_{i=1}^l \int_\Omega \langle \xi_i (x), T_{\mu_i}^{\nu_i}(x)-x \rangle \mu_i(x) \, dx + \frac{\lambda}{2}\boldsymbol{W_2^2}(\boldsymbol{\nu},\boldsymbol{\mu}),
\end{eqnarray}
for all $\boldsymbol{\nu} \in \Paa(\Omega)^l$.
Moreover, if $\boldsymbol{\xi} \in \partial\W(\boldsymbol{\mu})$ and $\boldsymbol{\kappa} \in \partial\W(\boldsymbol{\nu})$ then
\begin{eqnarray}
\label{part5-caracterisation subdif 2}
\sum_{i=1}^l\int_\Omega \langle \xi_i (x) - \kappa_i(T_{\mu_i}^{\nu_i}(x)), T_{\mu_i}^{\nu_i}(x)-x \rangle \mu_i(x) \, dx \leqslant - \lambda \boldsymbol{W_2^2}(\boldsymbol{\mu},\boldsymbol{\nu}).
\end{eqnarray}
\end{prop}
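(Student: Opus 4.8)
The plan is to transcribe the classical argument of Ambrosio--Gigli--Savar\'e (\cite[Chapter 10]{AGS}) from a single Wasserstein space to the product space $\Paa(\Omega)^l$, handling the sums over the components $i$ term by term. Everything follows from the defining $\liminf$-inequality \eqref{part5-Frechet subdif} combined with $\lambda$-geodesic convexity, so no new idea is needed beyond bookkeeping.

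First I would settle the equivalence. The forward implication, that \eqref{part5-caracterisation subdif 1} for all $\boldsymbol\nu$ implies $\boldsymbol\xi\in\partial\W(\boldsymbol\mu)$, is immediate: inserting \eqref{part5-caracterisation subdif 1} into the numerator of \eqref{part5-Frechet subdif} shows the quotient is bounded below by $\tfrac{\lambda}{2}\boldsymbol{W_2}(\boldsymbol\nu,\boldsymbol\mu)$, which vanishes as $\boldsymbol\nu\to\boldsymbol\mu$. For the converse, fix $\boldsymbol\nu\ne\boldsymbol\mu$ in $\Paa(\Omega)^l$ and let $\boldsymbol\mu^t=(\mu_1^t,\dots,\mu_l^t)$, with $\mu_i^t:=\big((1-t)\id+tT_{\mu_i}^{\nu_i}\big)_{\#}\mu_i$, be the product McCann geodesic joining $\boldsymbol\mu$ (at $t=0$) to $\boldsymbol\nu$ (at $t=1$). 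One checks that $\boldsymbol\mu^t$ stays in $\Paa(\Omega)^l$, that the optimal map from $\mu_i$ to $\mu_i^t$ is $(1-t)\id+tT_{\mu_i}^{\nu_i}$ so that $T_{\mu_i}^{\mu_i^t}(x)-x=t\big(T_{\mu_i}^{\nu_i}(x)-x\big)$, and that $\boldsymbol{W_2}(\boldsymbol\mu^t,\boldsymbol\mu)=t\,\boldsymbol{W_2}(\boldsymbol\nu,\boldsymbol\mu)$. Taking $\boldsymbol\nu=\boldsymbol\mu^t$ in \eqref{part5-Frechet subdif}, estimating $\W(\boldsymbol\mu^t)-\W(\boldsymbol\mu)$ from above by the $\lambda$-convexity bound $t\big(\W(\boldsymbol\nu)-\W(\boldsymbol\mu)\big)-\tfrac{\lambda}{2}t(1-t)\boldsymbol{W_2^2}(\boldsymbol\nu,\boldsymbol\mu)$, and cancelling the common factor $t$ from numerator and denominator, the limit $t\searrow 0$ gives exactly \eqref{part5-caracterisation subdif 1}.

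For the monotonicity inequality I would apply \eqref{part5-caracterisation subdif 1} twice: once with the subgradient $\boldsymbol\xi$ at $\boldsymbol\mu$ and competitor $\boldsymbol\nu$, once with $\boldsymbol\kappa$ at $\boldsymbol\nu$ and competitor $\boldsymbol\mu$. Summing the two inequalities cancels $\W(\boldsymbol\nu)-\W(\boldsymbol\mu)$ and leaves
\[
0\ \geqslant\ \sum_{i=1}^l\int_\Omega\langle\xi_i(x),T_{\mu_i}^{\nu_i}(x)-x\rangle\,\mu_i(x)\,dx\ +\ \sum_{i=1}^l\int_\Omega\langle\kappa_i(y),T_{\nu_i}^{\mu_i}(y)-y\rangle\,\nu_i(y)\,dy\ +\ \lambda\,\boldsymbol{W_2^2}(\boldsymbol\mu,\boldsymbol\nu).
\]
In each term of the second sum I would change variables $y=T_{\mu_i}^{\nu_i}(x)$, using that for $\mu_i,\nu_i\in\Paa(\Omega)$ the optimal map $T_{\nu_i}^{\mu_i}$ is the $\mu_i$-a.e.\ inverse of $T_{\mu_i}^{\nu_i}$ and $\nu_i=(T_{\mu_i}^{\nu_i})_{\#}\mu_i$; this rewrites the term as $-\int_\Omega\langle\kappa_i(T_{\mu_i}^{\nu_i}(x)),T_{\mu_i}^{\nu_i}(x)-x\rangle\,\mu_i(x)\,dx$, and regrouping yields \eqref{part5-caracterisation subdif 2}.

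The main obstacle is the converse half of the equivalence: one must make sure that the product McCann interpolation is an admissible competitor in \eqref{part5-Frechet subdif}, namely that it stays in $\Paa(\Omega)^l$ and that the transport map from $\mu_i$ to $\mu_i^t$ is the affine interpolant $(1-t)\id+tT_{\mu_i}^{\nu_i}$, so that the one-sided convexity estimate can legitimately be combined with the $\liminf$. The change of variables in the monotonicity step is then routine, the only input being invertibility of optimal maps between absolutely continuous measures.
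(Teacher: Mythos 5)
Your argument is correct and is precisely the variational-inequality/monotonicity characterization from Ambrosio--Gigli--Savar\'e adapted to the product space, which is exactly what the paper does (its proof consists of the single sentence deferring to \cite{AGS}, p.~231). The points you flag as needing care --- that the product McCann interpolant stays in $\Paa(\Omega)^l$ with optimal maps $(1-t)\id+tT_{\mu_i}^{\nu_i}$, and the $\mu_i$-a.e.\ invertibility of optimal maps between absolutely continuous measures used in the change of variables --- are indeed the only inputs, and both are standard.
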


\begin{proof}
The proof is the same as in the characterization by Variational inequalities and monotonicity done in \cite{AGS} p. 231.

\end{proof}

We can now prove the following uniqueness result.

\begin{thm}
\label{part5-uniqueness}
Assume $F_{i}$ satisfies \eqref{part5-McCann condition} and $\W_i$ is a $\lambda_i$-geodesically convex functional. 
Let $\boldsymbol{\rho}^1:=(\rho_1^1, \dots, \rho_l^1)$ and $\boldsymbol{\rho}^2:=(\rho_1^2, \dots, \rho_l^2)$, in $\mathcal{P}^{ac}(\Omega)$, two weak solutions of \eqref{part5-system} with initial conditions $\rho_i^1(0, \cdot)= \rho_{i,0}^1$ and $\rho_i^2(0, \cdot)= \rho_{i,0}^2$. 
If for all $T<+\infty$,
\begin{equation}
\label{part5-hyp gradient field}
\int_0^T \sum_{i=1}^l \|v^1_{i,t}\|_{L^2(\rho_{i,t}^1)} \,dt + \int_0^T \sum_{i=1}^l \|v^2_{i,t}\|_{L^2(\rho_{i,t}^2)} \,dt <+\infty,
\end{equation}
with, for $j\in \{1,2\}$,
 $$ v^j_{i,t}:= - \nabla F_i'(\rho_{i,t}^j) -\nabla u_i^j,$$
then for every $t\in   [0 ,T ]$,
$$  \boldsymbol{W_2^2}(\boldsymbol{\rho}_t^1,\boldsymbol{\rho}_t^2) \leqslant e^{-\left(2\sum_{i=1}^l \lambda_i\right) t}\boldsymbol{W_2^2}(\boldsymbol{\rho}^1_0,\boldsymbol{\rho}^2_0),$$
\end{thm}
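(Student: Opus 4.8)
The plan is to follow the standard ``two gradient flows'' stability argument of Ambrosio--Gigli--Savar\'e adapted to the product Wasserstein space $\Paa(\Omega)^l$. Set $\boldsymbol{\rho}^1_t$ and $\boldsymbol{\rho}^2_t$ to be the two weak solutions, and consider the function
$$ D(t) := \boldsymbol{W_2^2}(\boldsymbol{\rho}^1_t,\boldsymbol{\rho}^2_t) = \sum_{i=1}^l W_2^2(\rho^1_{i,t},\rho^2_{i,t}). $$
The first step is to recognize that each weak solution is, componentwise, a curve of maximal slope / solution to the continuity equation $\partial_t \rho^j_{i,t} + \dive(\rho^j_{i,t} v^j_{i,t}) = 0$ with velocity field $v^j_{i,t} = -\nabla F_i'(\rho^j_{i,t}) - \nabla u^j_i$, the integrability \eqref{part5-hyp gradient field} guaranteeing that these are absolutely continuous curves in $(\Pad(\Omega),W_2)$ with $v^j_{i,t}$ (up to projection onto tangent cones) being their metric velocities. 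Crucially, $-v^j_{i,t}$ is exactly an element of the Fr\'echet subdifferential $\partial(\F_i + \overline{\W}_i(\cdot\,|\boldsymbol{\rho}^j_t))$ evaluated at $\rho^j_{i,t}$: the term $\nabla F_i'(\rho^j_{i,t})$ gives the subdifferential of the internal energy $\F_i$ (which is $0$-convex by McCann's condition \eqref{part5-McCann condition}), and $\nabla u^j_i = \nabla_{x_i} c_i$ along the optimal plan gives the subdifferential of the multi-marginal term, with the assumed $\lambda_i$-geodesic convexity of $\W_i$.

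The second step is the differentiation of $D(t)$. By standard results (AGS, Theorem 8.4.7 / the derivative-of-squared-distance formula along absolutely continuous curves), for a.e.\ $t$,
$$ \frac{1}{2}\frac{d}{dt} W_2^2(\rho^1_{i,t},\rho^2_{i,t}) = -\int_\Omega \langle v^1_{i,t}(x), T^{2\to1}_{i,t}(x) - x\rangle\, d\rho^1_{i,t}(x) - \int_\Omega \langle v^2_{i,t}(y), T^{1\to2}_{i,t}(y) - y\rangle\, d\rho^2_{i,t}(y), $$
where $T^{2\to1}_{i,t}$ is the optimal transport map from $\rho^1_{i,t}$ to $\rho^2_{i,t}$ and $T^{1\to2}_{i,t}$ its inverse. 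Summing over $i$, and using that $-\boldsymbol{v}^1_t \in \partial(\sum_i\F_i + \sum_i\overline{\W}_i)$-type subdifferential at $\boldsymbol{\rho}^1_t$ and likewise for $\boldsymbol{\rho}^2_t$, I would apply the monotonicity inequality \eqref{part5-caracterisation subdif 2} to each functional: for $\F_i$ with $\lambda=0$ and for $\W_i$ with constant $\lambda_i$ (here one must be slightly careful, since $\overline{\W}_i(\cdot\,|\boldsymbol{\mu})$ has a frozen argument $\boldsymbol{\mu}$ that differs between the two solutions — I address this below). This yields
$$ \frac{1}{2}\frac{d}{dt} D(t) \leqslant -\sum_{i=1}^l \lambda_i\, W_2^2(\rho^1_{i,t},\rho^2_{i,t}) + (\text{cross terms from the frozen marginals}). $$
Granting that the cross terms can be absorbed, $\frac{d}{dt}D(t) \leqslant -2(\sum_i\lambda_i) D(t)$, and Gr\"onwall's lemma gives the claimed exponential estimate.

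The main obstacle is the handling of the coupling through the frozen marginals in $\overline{\W}_i$. When I write the subdifferential of the $i$-th functional in the scheme, it is the subdifferential of $\rho \mapsto \W_i(\rho^j_{1,t},\dots,\rho,\dots,\rho^j_{l,t})$, i.e.\ with the \emph{other} components taken from the \emph{same} solution $j$; but the monotonicity inequality \eqref{part5-caracterisation subdif 2} compares subdifferentials of a \emph{single fixed} functional at two points. The resolution is to use that the Kantorovich potential characterization \eqref{eq:caract pot Kantorovich avec le cout } makes $\nabla u^j_i = \nabla_{x_i} c_i$ pointwise along the optimal plan, and then to exploit the $\lambda_i$-geodesic convexity of the \emph{full} multi-marginal functional $\W_i$ on the product space $\Paa(\Omega)^l$ (as in Proposition~\ref{prop:geo conv multimarge}, whose hypothesis $\partial_{i,j}c\leqslant 0$ is precisely what makes the global product-space convexity work). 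One writes the velocity $-\nabla u^j_i$ as the $i$-th component of an element of $\partial\W_i$ at the full tuple $\boldsymbol{\rho}^j_t$; the remaining components of that subdifferential element, when paired against $T^{2\to1}_{k,t}-x$, contribute exactly the terms needed to telescope when summing over $i$, and \eqref{part5-caracterisation subdif 2} applied to $\W_i$ at $\boldsymbol{\rho}^1_t$ and $\boldsymbol{\rho}^2_t$ absorbs them with the factor $-\lambda_i \boldsymbol{W_2^2}(\boldsymbol{\rho}^1_t,\boldsymbol{\rho}^2_t)$. Care is also needed to justify a.e.\ differentiability of $D(t)$ and the exchange of $\frac{d}{dt}$ with the sum, which follows from \eqref{part5-hyp gradient field} and the locally absolutely continuous dependence of the curves; these are routine given the AGS machinery.
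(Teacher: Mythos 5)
Your proposal is correct and follows essentially the same route as the paper: differentiate $t \mapsto W_2^2(\rho^1_{i,t},\rho^2_{i,t})$ along the two absolutely continuous curves (the paper invokes Theorem 5.24 and Corollary 5.25 of \cite{S} where you invoke the AGS machinery), discard the internal-energy contribution via McCann's condition, apply the monotonicity inequality \eqref{part5-caracterisation subdif 2} for the $\lambda_i$-geodesically convex $\W_i$, and conclude with Gronwall. The frozen-marginal issue you isolate at the end is resolved in the paper exactly as you suggest, namely by applying \eqref{part5-caracterisation subdif 2} to the full multi-marginal functional $\W_i$ on the product space $\Paa(\Omega)^l$ at the tuples $\boldsymbol{\rho}^1_t$ and $\boldsymbol{\rho}^2_t$, rather than to the frozen-marginal functionals $\overline{\W}_i$.
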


\begin{proof}
Using Theorem 5.24, Corollary 5.25 from \cite{S} and assumption \eqref{part5-hyp gradient field}, we obtain
$$ \frac{d}{dt}\left(\frac{1}{2}W_2^2(\rho_{i,t}^1,\rho_{i,t}^2)\right) = \int_\Omega \langle x - T_{i}^t(x) , v^1_{i,t}(x) -v^2_{i,t}(T_{i}^t(x)) \rangle \rho_i^1(x) \, dx,$$
where $T_{i}^t$ is the optimal transport map between $\rho_{i,t}^1$ and $\rho_{i,t}^2$. 
Since $F_i$ satisfies McCann's condition, we have
$$ \int_{\Omega} \langle x - T_{i}^t(x) ,  \nabla F'_i(\rho_{i,t}^2  (T_{i}^t(x)))-\nabla F'_i(\rho_{i,t}^1 (x) ) \rangle \rho_{i,t}^1(x) \, dx \leqslant 0.$$  
In addition, $\W_i$ is $\lambda_i$-geodesically convex then \eqref{part5-caracterisation subdif 2} gives
$$
\sum_{i=1}^l\int_\Omega \langle \nabla u_{i,t}^1 (x) - \nabla u_{i,t}^2(T_{i}^t(x)), T_{i}^t(x)-x \rangle \rho_{i,t}^1(x) \, dx \leqslant - \sum_{i=1}^l\lambda_i \boldsymbol{W_2^2}(\boldsymbol{\rho}_{t}^1,\boldsymbol{\rho}_{t}^2).$$
Summing over $i$ and combining these inequalities, we obtain
$$ \frac{d}{dt}\left( \boldsymbol{W_2^2}(\boldsymbol{\rho}_{t}^1,\boldsymbol{\rho}_{t}^2) \right) \leqslant - \left(2 \sum_{i=1}^l\lambda_i \right)  \boldsymbol{W_2^2}(\boldsymbol{\rho}_{t}^1,\boldsymbol{\rho}_{t}^2).$$
Gronwall's Lemma concludes the proof.
\end{proof}

\begin{rem}
Assumption \eqref{part5-hyp gradient field} in Theorem \ref{part5-uniqueness} is made to ensure the absolute continuity of $W_2 (\rho_{i,t}^1,\rho_{i,t}^2)$ and can be checked using \eqref{part5-equality a.e 1} (see for example \cite[Proposition 7.3]{L}).

\end{rem}

 \bibliographystyle{plain}
\bibliography{Laborde_bib}

\begin{thebibliography}{10}

\bibitem{A}
M.~Agueh.
\newblock Existence of solutions to degenerate parabolic equations via the
  {M}onge-{K}antorovich theory.
\newblock {\em Adv. Differential Equations}, 10(3):309--360, 2005.

\bibitem{AC}
M.~Agueh and G.~Carlier.
\newblock Barycenters in the {W}asserstein space.
\newblock {\em SIAM J. Math. Anal.}, 43(2):904--924, 2011.

\bibitem{AGS}
L.~Ambrosio, N.~Gigli, and G.~Savar{\'e}.
\newblock {\em Gradient flows in metric spaces and in the space of probability
  measures}.
\newblock Lectures in Mathematics ETH Z\"{u}rich. Birkh\"{a}user Verlag, Basel,
  2005.

\bibitem{BC1}
J.-B. Baillon and G.~Carlier.
\newblock From discrete to continuous {W}ardrop equilibria.
\newblock {\em Netw. Heterog. Media}, 7(2):219--241, 2012.

\bibitem{B}
Y.~Brenier.
\newblock Polar factorization and monotone rearrangement of vector-valued
  functions.
\newblock {\em Comm. Pure Appl. Math.}, 44(4):375--417, 1991.

\bibitem{BS}
G.~Buttazzo and F.~Santambrogio.
\newblock A model for the optimal planning of an urban area.
\newblock {\em SIAM J. Math. Anal.}, 37(2):514--530, 2005.

\bibitem{BS1}
G.~Buttazzo and F.~Santambrogio.
\newblock A mass transportation model for the optimal planning of an urban
  region.
\newblock {\em SIAM Rev.}, 51(3):593--610, 2009.

\bibitem{C_multi}
G.~Carlier.
\newblock On a class of multidimensional optimal transportation problems.
\newblock {\em J. Convex Anal.}, 10(2):517--529, 2003.

\bibitem{CE}
G.~Carlier and I.~Ekeland.
\newblock The structure of cities.
\newblock {\em J. Global Optim.}, 29(4):371--376, 2004.

\bibitem{CE1}
G.~Carlier and I.~Ekeland.
\newblock Equilibrium structure of a bidimensional asymmetric city.
\newblock {\em Nonlinear Anal. Real World Appl.}, 8(3):725--748, 2007.

\bibitem{CL1}
G.~Carlier and M.~Laborde.
\newblock A splitting method for nonlinear diffusions with nonlocal,
  nonpotential drifts.
\newblock {\em Nonlinear Analysis: Theory, Methods \& Applications}, 150:1 --
  18, 2017.

\bibitem{CS}
G.~Carlier and F.~Santambrogio.
\newblock A variational model for urban planning with traffic congestion.
\newblock {\em ESAIM Control Optim. Calc. Var.}, 11(4):595--613, 2005.

\bibitem{DFF}
M.~Di~Francesco and S.~Fagioli.
\newblock Measure solutions for non-local interaction {PDE}s with two species.
\newblock {\em Nonlinearity}, 26(10):2777--2808, 2013.

\bibitem{DMGN}
S.~Di~Marino, A.~Gerolin, and L.~Nenna.
\newblock Optimal transportation theory with repulsive costs.
\newblock In {\em Topological optimization and optimal transport}, volume~17 of
  {\em Radon Ser. Comput. Appl. Math.}, pages 204--256. De Gruyter, Berlin,
  2017.

\bibitem{JKO}
R.~Jordan, D.~Kinderlehrer, and F.~Otto.
\newblock The variational formulation of the {F}okker-{P}lanck equation.
\newblock {\em SIAM J. Math. Anal.}, 29(1):1--17, 1998.

\bibitem{KMX}
D.~Kinderlehrer, L.~Monsaingeon, and X.~Xu.
\newblock A {W}asserstein gradient flow approach to {P}oisson-{N}ernst-{P}lanck
  equations.
\newblock {\em ESAIM Control Optim. Calc. Var.}, 23(1):137--164, 2017.

\bibitem{L}
M.~Laborde.
\newblock On some nonlinear evolution systems which are perturbations of
  {W}asserstein gradient flows.
\newblock In {\em Topological optimization and optimal transport}, volume~17 of
  {\em Radon Ser. Comput. Appl. Math.}, pages 304--332. De Gruyter, Berlin,
  2017.

\bibitem{MC}
R.~J. McCann.
\newblock A convexity principle for interacting gases.
\newblock {\em Adv. Math.}, 128(1):153--179, 1997.

\bibitem{Pass}
B.~Pass.
\newblock Multi-marginal optimal transport: theory and applications.
\newblock {\em ESAIM Math. Model. Numer. Anal.}, 49(6):1771--1790, 2015.

\bibitem{RS}
R.~Rossi and G.~Savar{\'e}.
\newblock Tightness, integral equicontinuity and compactness for evolution
  problems in {B}anach spaces.
\newblock {\em Ann. Sc. Norm. Super. Pisa Cl. Sci. (5)}, 2, 2003.

\bibitem{S1}
F.~Santambrogio.
\newblock Transport and concentration problems with interaction effects.
\newblock {\em J. Global Optim.}, 38(1):129--141, 2007.

\bibitem{S2}
F.~Santambrogio.
\newblock {\em Variational problems in transport theory with mass
  concentration}, volume~4 of {\em Tesi. Scuola Normale Superiore di Pisa
  (Nuova Series) [Theses of Scuola Normale Superiore di Pisa (New Series)]}.
\newblock Edizioni della Normale, Pisa, 2007.
\newblock Dissertation, Scuola Normale Superiore, Pisa, 2006.

\bibitem{S_grenoble}
F.~Santambrogio.
\newblock Models and applications of optimal transport in economics, traffic,
  and urban planning.
\newblock In {\em Optimal transportation}, volume 413 of {\em London Math. Soc.
  Lecture Note Ser.}, pages 22--40. Cambridge Univ. Press, Cambridge, 2014.

\bibitem{S}
F.~Santambrogio.
\newblock {\em Optimal Transport for Applied Mathematicians}.
\newblock Progress in Nonlinear Differential Equations and Their Applications
  87. Birkasauser Verlag, Basel, 2015.

\bibitem{V1}
C.~Villani.
\newblock {\em Topics in optimal transportation}, volume~58 of {\em Graduate
  Studies in Mathematics}.
\newblock American Mathematical Society, Providence, RI, 2003.

\bibitem{V2}
C.~Villani.
\newblock {\em Optimal transport}, volume 338 of {\em Grundlehren der
  Mathematischen Wissenschaften [Fundamental Principles of Mathematical
  Sciences]}.
\newblock Springer-Verlag, Berlin, 2009.
\newblock Old and new.

\end{thebibliography}
\end{document}